\date{}
\DeclareMathOperator{\tr}{tr}
\newtheorem{theorem}[subsection]{Theorem}
\newtheorem{proposition}[subsection]{Proposition}
\newtheorem{lemma}[subsection]{Lemma}
\theoremstyle{definition}
\newtheorem{definition}[subsection]{Definition}
\newtheorem{remark}[subsection]{Remark}
\begin{document}
\setlength{\baselineskip}{18pt}

\title{\textbf{A new approach to a network of congruences on an inverse semigroup}}
\author[1]{Ying-Ying Feng}
\author[2]{Li-Min Wang\thanks{Correspondence author. Email: wanglm@scnu.edu.cn}}
\author[2]{Lu Zhang}
\author[2]{Hai-Yuan Huang}
\affil[1]{Department of Mathematics, Foshan University, \authorcr Foshan 528000, P. R. China}
\affil[2]{School of Mathematics, South China Normal University, \authorcr Guangzhou 510631, P. R. China}
\maketitle

\begin{abstract}
 This paper enriches the list of known properties of congruence sequences starting from the universal relation and successively performing
 the operators lower $k$ and lower $t$. Two series of inverse semigroups, namely $\ker{\alpha_n}$-is-Clifford semigroups and
 $\beta_n$-is-over-$E$-unitary semigroups, are investigated. Two congruences, namely $\alpha_{n+2}$ and $\beta_{n+2}$, are found to be the
 least $\ker{\alpha_n}$-is-Clifford and least $\beta_n$-is-over-$E$-unitary congruences on $S$, respectively. A new system of implications is
 established for the quasivarieties of inverse semigroups induced by the min network.

 \textbf{Keywords:} inverse semigroup, congruence, $\ker{\alpha_n}$-is-Clifford semigroup, $\beta_n$-is-over-$E$-unitary semigroup, min
 network.

 \textbf{2000 MR Subject Classification:} 20M18
\end{abstract}

In semigroup theory it is not possible to avoid the explicit study of congruences. Congruences play a central role in many of the structure
theorems and other important considerations in the theory of inverse semigroups. An efficient handling of congruences on inverse semigroups is
the kernel - trace approach. From the kernel - trace decomposition of congruences, we obtain two operators, lower $k$ and lower $t$, on the
congruence lattice $\mathcal{C}(S)$ of an inverse semigroup. We denote by $\rho_k$ the least congruence on $S$ having the same kernel as
$\rho$, and by $\rho_t$ the least congruence having the same trace as $\rho$. Starting with the universal congruence $\omega$ on $S$, we form
two sequences: $$\omega,~\omega_k,~(\omega_k)_t, \cdots \quad \text{and} \quad \omega,~\omega_t,~(\omega_t)_k, \cdots.$$ These congruences,
together with the intersections $\omega_t \cap \omega_k$, $(\omega_t)_k \cap (\omega_k)_t$, $\cdots$, form a sublattice of the lattice of all
congruences on $S$. Petrich -- Reilly \cite{network} first investigated properties of these congruences and established a system of
implications for the resulting quasivarieties.

Recall that $(\omega_t)_k=\pi$ is the least $E$-unitary congruence, and that $((\omega_k)_t)_k=\lambda$ is the least $E$-reflexive congruence.
An inverse semigroup $S$ is \emph{$E$-reflexive} if for any $x, y \in S$ and $e \in E_{\scriptscriptstyle{S}}$, $exy \in
E_{\scriptscriptstyle{S}}$ implies $eyx \in E_{\scriptscriptstyle{S}}$. Equivalently, $S$ is $E$-reflexive if and only if every $\eta$-class of
$S$, where $\eta$ denotes the least semilattice congruence, is $E$-unitary, i.e. $\eta$ is over $E$-unitary inverse semigroups. In this sense,
$E$-unitary inverse semigroups can be viewed as semigroups whose universal relation $\omega$ is over $E$-unitary inverse semigroups. There is
some relationship between the semigroups associated with the congruences $\beta_{n+2}$ and $\beta_n$ at the first few levels of the min
network. Dually, recall that $(\omega_k)_t=\nu$ is the least Clifford congruence, and $((\omega_t)_k)_t$ is the least $E\omega$-Clifford
congruence, or the least $\ker{\sigma}$-is-Clifford congruence. And Clifford semigroups can be regarded as $\ker{\omega}$-is-Clifford
semigroups in this sense. There is also a relationship between the semigroups associated with the congruences $\alpha_{n+2}$ and $\alpha_n$. We
wonder whether these patterns continue indefinitely.

Motivated by the symmetry observed above, our objective here is to obtain properties of the min network which highlights two series of inverse
semigroups, namely $\ker{\alpha_n}$-is-Clifford semigroups and $\beta_n$-is-over-$E$-unitary semigroups, and lead to characterizations of both
series. Finally we come to a similar but totally new system of implications. Although both of ours and Petrich -- Reilly's (\cite{network})
characterizations for the min network are inductive ones, Petrich -- Reilly focus on the the properties leading to expressions of
quasivarieties. The new characterization is based on all sorts of familiar, omnipresent relations, including special congruences, Green's
relations, $\mathcal{F}$ and $\mathcal{C}$-relations. It investigates the inner relations among these extremal congruences and the known
relations, which makes it possible to have more equivalent descriptions. Furthermore, the new characterization reflects symmetry in inverse
semigroups, where \textquoteleft\textquoteleft kernel\textquoteright\textquoteright~corresponds to \textquoteleft\textquoteleft
over\textquoteright\textquoteright~and \textquoteleft\textquoteleft Clifford\textquoteright\textquoteright~corresponds to
\textquoteleft\textquoteleft $E$-unitary\textquoteright\textquoteright.

In Section 1 we summarize notation and terminology to be used in the paper. In Section 2 we study $\ker{\alpha_n}$-is-Clifford semigroups,
$\beta_n$-is-over $E$-unitary semigroups and related congruences. A similar but symmetric system of implications for the quasivarieties induced
by the min network is established. The principal results for Section 3 are necessary and sufficient conditions for coincidences of certain
congruences.

\section{Preliminaries}

\emph{Throughout the entire paper, $S$ denotes an arbitrary inverse semigroup with semilattice $E_{\scriptscriptstyle{S}}$ of
idempotents.} When more than one semigroup is under discussion, $\theta(S)$ or $\theta(S/\rho)$ would be used to clarify the semigroup on which
the congruence is.

We shall use the notation and terminology of Howie \cite{fundamental} and Petrich \cite{inverse}, to which the reader is referred for basic
information and results on inverse semigroups. For an arbitrary inverse semigroup $S$, we denote by $E_{\scriptscriptstyle{S}}$ the semilattice
of its idempotents. The complete lattice of congruences on $S$ is denoted by $\mathcal{C}(S)$. For $\rho \in \mathcal{C}(S)$,
$\tr{\rho}=\rho|_{_{E_{\scriptscriptstyle{S}}}}$ is the \emph{trace} of $\rho$, and $\ker{\rho}=\{a \in S\,|\,a\, \rho\, e~ \text{for
some}~e \in E_{\scriptscriptstyle{S}}\}$ is the \emph{kernel} of $\rho$. The kernel of a congruence on an inverse semigroup is a normal inverse
subsemigroup. A congruence on an inverse semigroup is determined uniquely by its trace and kernel.

\begin{lemma}\textup{(\cite[Theorem 4.4]{congruence})}
 Let $\rho$ be a congruence on $S$. Then $$a~\rho~b \iff a^{-1}a\,\tr{\rho}\, b^{-1}b,~ab^{-1} \in \ker{\rho}.$$
\end{lemma}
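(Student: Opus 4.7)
The plan is to split into the forward and backward implications. The forward direction is essentially bookkeeping. Since congruences on an inverse semigroup commute with inversion, $a\,\rho\, b$ gives $a^{-1}\,\rho\, b^{-1}$, and hence $a^{-1}a\,\rho\, b^{-1}b$; both sides are idempotents, so this relation lies in $\tr\rho$. Similarly $ab^{-1}\,\rho\, bb^{-1}\in E_{\scriptscriptstyle{S}}$, which places $ab^{-1}$ in $\ker\rho$.

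For the backward direction I would fix an idempotent $e\in E_{\scriptscriptstyle{S}}$ with $ab^{-1}\,\rho\, e$, and observe that taking inverses yields $ba^{-1}\,\rho\, e$ as well (since $e^{-1}=e$). The trace hypothesis is then used to transport $a$: from $a=a(a^{-1}a)$ and $a^{-1}a\,\tr\rho\,b^{-1}b$ one obtains
\[
a\ \rho\ a(b^{-1}b)=(ab^{-1})b\ \rho\ eb,
\]
where the second step multiplies the kernel relation on the right by $b$. A symmetric calculation using $b=b(b^{-1}b)$ and $ba^{-1}\,\rho\, e$ yields $b\,\rho\, ea$.

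The closing move is the key observation: multiplying $a\,\rho\, eb$ on the left by $e$ and using $e^{2}=e$ gives $ea\,\rho\, eb$. Chaining
\[
a\ \rho\ eb\ \rho\ ea\ \rho\ b
\]
then finishes the argument. The only pitfall I anticipate is the temptation to try to derive $eb\,\rho\, b$ directly from the kernel relation, which loops back on itself; the trace hypothesis is precisely what permits the detour through $ea$ on the opposite side, letting the two symmetric calculations meet in the middle. Once this detour is recognised, all four intermediate $\rho$-relations come out of multiplying known relations on the left or right by fixed elements, so I do not foresee a genuinely hard step.
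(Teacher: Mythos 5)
Your proof is correct. The paper does not prove this lemma at all---it is quoted verbatim from Petrich's \emph{Congruences on Inverse Semigroups} (Theorem 4.4)---and your argument (forward direction by compatibility of $\rho$ with inversion; backward direction by deriving $a\,\rho\,eb$ and $b\,\rho\,ea$ from the trace and kernel hypotheses, then closing the loop via $ea\,\rho\,eb$) is exactly the standard kernel--trace computation one finds in the literature, so there is nothing to flag.
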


For any $\rho$, $\theta \in \mathcal{C}(S)$, the relations $\mathcal{T}$ and $\mathcal{K}$ are defined as follows, $$\rho~\mathcal{T}~\theta
\iff \tr{\rho}=\tr{\theta}, \qquad \rho~\mathcal{K}~\theta \iff \ker{\rho}=\ker{\theta}.$$ The relation $\mathcal{T}$ is a complete congruence
on the lattice $\mathcal{C}(S)$, while $\mathcal{K}$ is an equivalence relation on
$\mathcal{C}(S)$. The equivalence class $\rho \mathcal{T}$ [resp. $\rho \mathcal{K}$] is an interval of $\mathcal{C}(S)$ with greatest and
least element to be denoted by $\rho^T$ [resp. $\rho^K$] and $\rho_t$ [resp. $\rho_k$], respectively.

\begin{lemma}\textup{(\cite[Theorem \@Roman3.2.5]{inverse})}
 For any congruence $\rho$ on $S$,
 \begin{align*}
  a~\rho^T~b &\iff a^{-1}ea\,\rho\,b^{-1}eb~\text{for all}~e \in E_{\scriptscriptstyle{S}},\\
  a~\rho_t~b &\iff ae=be~\text{for some}~e \in E_{\scriptscriptstyle{S}},~e\,\rho\,a^{-1}a\,\rho\,b^{-1}b.
 \end{align*}
\end{lemma}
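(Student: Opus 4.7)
My plan is to treat each equivalence separately: in both cases I define a relation $\tau$ matching the right-hand side, verify $\tau$ is a congruence with $\tr \tau = \tr \rho$, and then establish extremality within the $\mathcal{T}$-class of $\rho$.

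For the first equivalence, set $\tau = \{(a,b)\mid a^{-1}ea\,\rho\,b^{-1}eb \text{ for every } e \in E_{\scriptscriptstyle{S}}\}$. The equivalence-relation properties are inherited from $\rho$. Right compatibility follows from the identity $(ac)^{-1}e(ac) = c^{-1}(a^{-1}ea)c$ together with multiplication of the hypothesis by $c^{-1}$ and $c$; left compatibility follows from $(ca)^{-1}e(ca) = a^{-1}(c^{-1}ec)a$, applying the defining condition of $\tau$ to the new idempotent $c^{-1}ec \in E_{\scriptscriptstyle{S}}$. Compatibility of $\rho$ itself gives $\rho \subseteq \tau$. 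For equality of traces, if $e\,\tau\,f$ with $e,f \in E_{\scriptscriptstyle{S}}$, specialising the test idempotent to $e$ and then to $f$ yields $e\,\rho\,fef$ and $efe\,\rho\,f$; multiplying on the left by $e$ (respectively on the right by $f$) gives $e\,\rho\,ef\,\rho\,f$. Finally, if $\sigma\,\mathcal{T}\,\rho$ and $a\,\sigma\,b$, then $a^{-1}ea\,\sigma\,b^{-1}eb$ by compatibility; both sides are idempotents, so agreement of $\sigma$ and $\rho$ on $E_{\scriptscriptstyle{S}}$ promotes this $\sigma$-relation to a $\rho$-relation, giving $\sigma \subseteq \tau$ and hence $\tau = \rho^T$.

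For the second equivalence, set $\tau = \{(a,b)\mid ae = be \text{ for some } e \in E_{\scriptscriptstyle{S}} \text{ with } e\,\rho\,a^{-1}a\,\rho\,b^{-1}b\}$. Reflexivity uses $e = a^{-1}a$; for transitivity with witnesses $e$ and $f$ I take the meet $ef$, verifying $aef = bef = bfe = cfe = cef$ and that $ef$ is $\rho$-related to each of the required diagonal entries. Right compatibility with $c \in S$ uses the simple witness $c^{-1}ec$. Left compatibility is more delicate: the natural witness for $(ca,cb)$ is $g = e\cdot(ca)^{-1}(ca)$, and $(ca)g = (cb)g$ follows directly from $cae = cbe$; the relation $g\,\rho\,(ca)^{-1}(ca)$ uses $e\,\rho\,a^{-1}a$ together with $(ca)^{-1}(ca) \leq a^{-1}a$, while $g\,\rho\,(cb)^{-1}(cb)$ requires the identity $e\cdot(ca)^{-1}(ca) = e\cdot(cb)^{-1}(cb)$, which follows from $(cae)^{-1}(cae) = (cbe)^{-1}(cbe)$ after rewriting in the semilattice. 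Traces coincide since $e\,\rho\,f$ in $E_{\scriptscriptstyle{S}}$ is witnessed in $\tau$ by $ef$. The inclusion $\tau \subseteq \rho$ follows from $e\,\rho\,a^{-1}a$ and $e\,\rho\,b^{-1}b$, giving $a\,\rho\,ae = be\,\rho\,b$. For minimality, if $\sigma\,\mathcal{T}\,\rho$ and $a\,\tau\,b$ via $e$, then the idempotents $e, a^{-1}a, b^{-1}b$ are pairwise $\sigma$-related, so $a\,\sigma\,ae = be\,\sigma\,b$, hence $\tau \subseteq \sigma$.

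The principal obstacle is the left-compatibility verification for $\rho_t$: reconciling the asymmetric-looking witness $e\cdot(ca)^{-1}(ca)$ with the two ostensibly different target idempotents $(ca)^{-1}(ca)$ and $(cb)^{-1}(cb)$. The key identity $e\cdot(ca)^{-1}(ca) = e\cdot(cb)^{-1}(cb)$, obtained by noting that both expressions collapse via semilattice arithmetic to the common range idempotent of $cae = cbe$, is what lets the witness play a symmetric role. Once this identity is in hand, the rest of the argument is routine congruence-lattice bookkeeping.
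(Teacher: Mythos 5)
Your proposal is correct. The paper does not prove this lemma at all---it is quoted from Petrich \cite{inverse} (Theorem III.2.5)---so there is no internal proof to compare against; your direct verification (define the candidate relation, check it is a congruence with trace $\tr\rho$, then establish extremality in the $\mathcal{T}$-class) is essentially the classical argument from that source. The delicate points all check out: in particular, your left-compatibility witness $g=e(ca)^{-1}(ca)$ for $\rho_t$ works because $e(ca)^{-1}(ca)=(cae)^{-1}(cae)=(cbe)^{-1}(cbe)=e(cb)^{-1}(cb)$ by commuting idempotents and $cae=cbe$, while $(ca)^{-1}(ca)\leq a^{-1}a$ and $(cb)^{-1}(cb)\leq b^{-1}b$ turn $e\,\rho\,a^{-1}a\,\rho\,b^{-1}b$ into $g\,\rho\,(ca)^{-1}(ca)\,\rho\,(cb)^{-1}(cb)$, exactly as you claim.
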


On any inverse semigroup $S$, two relations $\mathcal{F}$ and $\mathcal{C}$ are defined by $$a\, \mathcal{F}\, b \iff a^{-1}b \in
E_{\scriptscriptstyle{S}}, \qquad a\, \mathcal{C}\, b \iff a^{-1}b,\, ab^{-1} \in E_{\scriptscriptstyle{S}}.$$

\begin{lemma}\label{min} \textup{(\cite[Theorem 6.2]{network})}
 For any congruence $\rho$ on an inverse semigroup $S$, $$\rho_t=(\rho \cap \mathcal{F})^*=(\rho \cap \mathcal{C})^*, \qquad \rho_k=(\rho \cap
 \mathcal{L})^*=(\rho \cap \mathcal{R})^*,$$ where $\xi^*$ denotes the least congruence on $S$ containing $\xi$.
\end{lemma}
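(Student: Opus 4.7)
The strategy is to reduce each of the four equalities to two assertions: a direct inclusion $\rho \cap X \subseteq \rho_{\star}$ (giving $(\rho \cap X)^{*} \subseteq \rho_{\star}$), together with a verification that $(\rho \cap X)^{*}$ lies in the $\mathcal{T}$- or $\mathcal{K}$-class of $\rho$, which by minimality of $\rho_{\star}$ in that class supplies the reverse inclusion. The set-theoretic fact $\mathcal{C} \subseteq \mathcal{F}$ and the observation that $x \mapsto x^{-1}$ is an anti-automorphism of $S$ fixing every congruence and swapping $\mathcal{L}$ with $\mathcal{R}$ take care of the passages from $\mathcal{C}$ to $\mathcal{F}$ and from $\mathcal{L}$ to $\mathcal{R}$.

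\emph{Trace side.} To show $\rho \cap \mathcal{C} \subseteq \rho_t$, take $a \, \rho \, b$ with $a \, \mathcal{C} \, b$ and set $e = a^{-1}a \cdot b^{-1}b$; compatibility of $a$ and $b$ gives their common meet $ae = ab^{-1}b = ba^{-1}a = be$, while $a \, \rho \, b$ forces $e \, \rho \, a^{-1}a \, \rho \, b^{-1}b$, so the formula in the second preceding lemma yields $a \, \rho_t \, b$. For the reverse inclusion (and the sibling for $\mathcal{F}$), note that any $\rho$-related pair of idempotents is trivially $\mathcal{C}$- (hence $\mathcal{F}$-) related, so both $(\rho \cap \mathcal{C})^{*}$ and $(\rho \cap \mathcal{F})^{*}$ have trace $\tr \rho$ and therefore contain $\rho_t$ by minimality. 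Finally, $\rho \cap \mathcal{F} \subseteq (\rho \cap \mathcal{C})^{*}$: given $a \, \rho \, b$ with $a^{-1}b \in E_S$, the idempotent $e = a^{-1}b$ is self-inverse (so $b^{-1}a = a^{-1}b$) and satisfies $e \leq a^{-1}a$ and $e \leq b^{-1}b$ (both are commuting-idempotent calculations, e.g.\ $(a^{-1}a)(a^{-1}b) = (a^{-1}aa^{-1})b = a^{-1}b$); the intermediate $c = ae = aa^{-1}b$ is then $\rho \cap \mathcal{C}$-related to both $a$ and $b$, the decisive point being that $ac^{-1} = aea^{-1}$ is idempotent because $e \leq a^{-1}a$ forces $(aea^{-1})^{2} = ae(a^{-1}a)ea^{-1} = ae \cdot ea^{-1} = aea^{-1}$.

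\emph{Kernel side.} That $\rho \cap \mathcal{L} \subseteq \rho_k$ is immediate from the first preceding lemma: if $a \, \rho \, b$ and $a^{-1}a = b^{-1}b$ then the trace condition is trivial, and $ab^{-1} \, \rho \, bb^{-1} \in E_S$ places $ab^{-1}$ in $\ker \rho = \ker \rho_k$. For $\rho_k \subseteq (\rho \cap \mathcal{L})^{*}$, I verify $\ker\bigl((\rho \cap \mathcal{L})^{*}\bigr) = \ker \rho$ and invoke minimality of $\rho_k$ in its $\mathcal{K}$-class. The non-trivial inclusion is $\ker \rho \subseteq \ker\bigl((\rho \cap \mathcal{L})^{*}\bigr)$: given $a \in \ker \rho$, say $a \, \rho \, e$ with $e \in E_S$, inversion yields $a \, \rho \, a^{-1}$ and then right-multiplication by $a$ gives $a^{2} \, \rho \, a^{-1}a$, while the chain $a^{2} \, \rho \, ea \, \rho \, e^{2} = e \, \rho \, a$ gives $a^{2} \, \rho \, a$; transitivity supplies $a \, \rho \, a^{-1}a$. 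Since $(a, a^{-1}a) \in \mathcal{L}$ always, the pair belongs to $\rho \cap \mathcal{L}$, and so $a \in \ker\bigl((\rho \cap \mathcal{L})^{*}\bigr)$.

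The main obstacle is the $\mathcal{F}$-to-$\mathcal{C}$ reduction: without two-sided compatibility the clean meet formula is unavailable, and one must verify by hand that the single intermediate $c = aa^{-1}b$ sits in $\rho \cap \mathcal{C}$ on both sides, which hinges on the delicate order fact $a^{-1}b \leq a^{-1}a$. Everything else is routine idempotent manipulation together with the minimality characterisations of the $\mathcal{T}$- and $\mathcal{K}$-classes and the two preceding lemmas.
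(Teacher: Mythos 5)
Your proof is correct, but there is no proof in the paper to compare it with: the paper states this lemma purely as a quotation of \cite[Theorem 6.2]{network}, so its ``proof'' is the citation itself. Judged on its own merits, your argument is a sound, self-contained derivation from the two lemmas the paper does quote (the kernel--trace characterization $a\,\rho\,b \iff a^{-1}a\,\tr{\rho}\,b^{-1}b,\ ab^{-1}\in\ker\rho$, and the formula $a\,\rho_t\,b \iff ae=be$ for some $e\,\rho\,a^{-1}a\,\rho\,b^{-1}b$): the inclusions $\rho\cap\mathcal{C}\subseteq\rho_t$ and $\rho\cap\mathcal{L}\subseteq\rho_k$, the trace and kernel computations placing $(\rho\cap\mathcal{F})^*$, $(\rho\cap\mathcal{C})^*$ in the $\mathcal{T}$-class and $(\rho\cap\mathcal{L})^*$ in the $\mathcal{K}$-class of $\rho$, the interpolation $\rho\cap\mathcal{F}\subseteq(\rho\cap\mathcal{C})^*$ through $c=aa^{-1}b$, the closing chain $\rho_t\subseteq(\rho\cap\mathcal{F})^*\subseteq(\rho\cap\mathcal{C})^*\subseteq\rho_t$, and the transfer from $\mathcal{L}$ to $\mathcal{R}$ by the inversion anti-automorphism all check out. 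Two caveats. First, the equality $ae=ab^{-1}b=ba^{-1}a=be$ is precisely the nontrivial content of the standard fact that compatible elements admit a meet; as written you are citing folklore rather than computing. It is true, but a self-contained version needs a few lines: $a^{-1}b=b^{-1}a$ and $ab^{-1}=ba^{-1}$ give $ba^{-1}a=aa^{-1}b$ and $ab^{-1}b=bb^{-1}a$, and then $aa^{-1}b=(ab^{-1})(bb^{-1}a)$ and $bb^{-1}a=(ba^{-1})(aa^{-1}b)$ exhibit each of $aa^{-1}b$, $bb^{-1}a$ as an idempotent multiple of the other, so antisymmetry of the natural partial order forces equality. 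Alternatively, you can bypass the meet entirely: if $c\le a$ and $c\,\rho\,a$, then $a(c^{-1}c)=c=c(c^{-1}c)$, so taking $e=c^{-1}c$ in the $\rho_t$-formula gives $c\,\rho_t\,a$; since your interpolant $c=aa^{-1}b$ lies below both $a$ and $b$ and is $\rho$-related to both, this yields $\rho\cap\mathcal{F}\subseteq\rho_t$ outright and collapses the whole trace side into one argument. Second, the claim in your plan that $\mathcal{C}\subseteq\mathcal{F}$ ``takes care of'' the passage from $\mathcal{C}$ to $\mathcal{F}$ is misleading: that inclusion gives only the trivial containment $(\rho\cap\mathcal{C})^*\subseteq(\rho\cap\mathcal{F})^*$, and the substance of the passage is exactly your interpolation step---which, fortunately, you do carry out in full.
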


$E_{\scriptscriptstyle{S}}\zeta$, the centralizer of $E_{\scriptscriptstyle{S}}$ in $S$, is defined by $$E_{\scriptscriptstyle{S}}\zeta=\{a \in
S\,|\,ae=ea~\text{for all}~e \in E_{\scriptscriptstyle{S}}\}.$$ $E_{\scriptscriptstyle{S}}\omega$, the closure of $E_{\scriptscriptstyle{S}}$
in $S$, is defined by $$E_{\scriptscriptstyle{S}}\omega=\{a \in S\,|\,a \ge e~\text{for some}~e \in E_{\scriptscriptstyle{S}}\},$$ where $\ge$
denotes the natural partial order on $S$ defined by $a
\le b \Leftrightarrow (\exists e \in E_{\scriptscriptstyle{S}})~a=eb \Leftrightarrow (\exists f \in E_{\scriptscriptstyle{S}})~a=bf$. A
semigroup which is a semilattice of groups is a \emph{Clifford semigroup}. Equivalently, $S$ is a Clifford semigroup if and only if $S$ is
regular and its idempotents lie in its centre. A semigroup $S$ is said to be \emph{$E$-unitary} if $ey=e$ for some $e \in
E_{\scriptscriptstyle{S}}$ implies that $y \in E_{\scriptscriptstyle{S}}$. Equivalently $S$ is $E$-unitary if and only if it satisfies the
implication $xy=x \Rightarrow y^2=y$. A subset $K$ of $S$ is \emph{full} if $E_{\scriptscriptstyle{S}} \subseteq K$. A congruence $\rho$
\emph{saturates} $K$ if $K$ is a union of $\rho$-classes.

Let $\mathcal{P}$ be a class of semigroups and $\rho \in \mathcal{C}(S)$. Then $\rho$ is \emph{over $\mathcal{P}$} if each $\rho$-class which
is a subsemigroup of $S$ belongs to $\mathcal{P}$. Also $\rho$ is a \emph{$\mathcal{P}$-congruence} if $S/\rho \in \mathcal{P}$. A congruence
$\rho$ on $S$ is \emph{idempotent separating} if $e^2=e$, $f^2=f$ and $e\, \rho\, f$ imply that $e=f$. Equivalently, $\rho$ is idempotent
separating if and only if $\rho \subseteq \mathcal{H}$. On the other hand, $\rho$ is \emph{idempotent pure} if
$\ker{\rho}=E_{\scriptscriptstyle{S}}$. Equivalently, $\rho$ is idempotent pure if and only if $\rho \subseteq \mathcal{C}$. We denote by
$\sigma$, $\eta$, $\mu$ and $\tau$ the least group, least semilattice, greatest idempotent separating and greatest idempotent pure congruences
on $S$, respectively. The equality and the universal relations on $S$ are denoted by $\varepsilon$ and $\omega$ respectively.

An inverse semigroup $S$ is \emph{fundamental} if $\varepsilon$ is the only congruence on $S$ contained in $\mathcal{H}$ (equivalently,
if $\mu=\varepsilon$). An inverse semigroup $S$ is $E$-\emph{disjunctive} if $\varepsilon$ is the only congruence on $S$ saturating
$E_{\scriptscriptstyle{S}}$ (equivalently, if $\tau=\varepsilon$).

Inverse semigroups the closure of whose set of idempotents is a Clifford semigroup were first studied by Billhardt \cite{closure}.

\begin{lemma}\textup{(\cite[Lemma 5]{closure})}
 Let $S$ be an inverse semigroup and $\sigma$ be the least group congruence on $S$. Then the following statements are equivalent.\\
(1) $E_S\omega$ is a Clifford semigroup;\\
(2) $[\,a\,\sigma\,b~\text{and}~a^{-1}a \leq b^{-1}b\,] \Rightarrow aa^{-1} \leq bb^{-1}$;\\
(3) $\sigma \cap \mathcal{L}=\sigma \cap \mathcal{R}$;\\
(4) $\sigma \cap \mathcal{L}$ is a congruence;\\
(5) $\sigma \cap \mathcal{R}$ is a congruence.
\end{lemma}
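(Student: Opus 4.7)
The plan is to establish the equivalences via $(1) \Leftrightarrow (2)$, $(1) \Leftrightarrow (3)$, and then $(3) \Leftrightarrow (4)$ together with its symmetric analogue $(3) \Leftrightarrow (5)$. Throughout I will exploit the standard facts $\ker\sigma = E_S\omega$, the characterization $a\,\sigma\,b \Leftrightarrow ab^{-1} \in E_S\omega$, the reduction $a^{-1}a \leq b^{-1}b \Leftrightarrow a = ab^{-1}b$, and the well-known description of Clifford semigroups as those inverse semigroups in which $x^{-1}x = xx^{-1}$ holds for every element $x$.

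For $(1) \Rightarrow (2)$, given $a\,\sigma\,b$ with $a^{-1}a \leq b^{-1}b$, the element $c := ab^{-1}$ lies in $E_S\omega$, so the Clifford identity forces $cc^{-1} = c^{-1}c$; using $a = ab^{-1}b$ these two idempotents simplify respectively to $aa^{-1}$ and to $b(a^{-1}a)b^{-1} \leq bb^{-1}$, yielding $aa^{-1} \leq bb^{-1}$. For the converse, given $a \in E_S\omega$, applying (2) to the pair $(a, a^{-1}a)$ gives $aa^{-1} \leq a^{-1}a$, and applying it to $(a^{-1}a, a)$ gives the reverse inequality, so $E_S\omega$ is Clifford.

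For $(1) \Leftrightarrow (3)$, the same element $c = ab^{-1}$ does the work. If $a\,\sigma\,b$ and $a\,\mathcal{L}\,b$, i.e.\ $a^{-1}a = b^{-1}b$, then a direct computation shows the two idempotents of $c$ reduce to exactly $aa^{-1}$ and $bb^{-1}$, and Clifford forces their equality, that is $a\,\mathcal{R}\,b$; passing to inverses gives $\sigma \cap \mathcal{R} \subseteq \sigma \cap \mathcal{L}$, so (1) implies (3). Conversely, for $a \in E_S\omega$ the pair $(a, aa^{-1})$ belongs to $\sigma \cap \mathcal{R}$ trivially, so (3) places it in $\sigma \cap \mathcal{L}$, forcing $a^{-1}a = aa^{-1}$ and hence (1).

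For $(3) \Leftrightarrow (4)$ and $(3) \Leftrightarrow (5)$, note that $\sigma \cap \mathcal{L}$ is always a right congruence (intersection of a congruence and a right congruence) and $\sigma \cap \mathcal{R}$ is always a left congruence, so (3) immediately delivers both (4) and (5). For (4) $\Rightarrow$ (3), set $\rho = \sigma \cap \mathcal{L}$; its trace sits inside $\mathcal{L}|_{E_S} = \varepsilon$, so $\rho$ is idempotent separating, whence $\rho \subseteq \mathcal{H} \subseteq \mathcal{R}$, giving $\sigma \cap \mathcal{L} \subseteq \sigma \cap \mathcal{R}$; the opposite inclusion is obtained by transferring any $(a, b) \in \sigma \cap \mathcal{R}$ to $(a^{-1}, b^{-1}) \in \sigma \cap \mathcal{L} \subseteq \mathcal{H}$ and reading off $a\,\mathcal{L}\,b$. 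The implication (5) $\Rightarrow$ (3) is entirely symmetric. The main obstacle is the bookkeeping when identifying the two idempotents of $c = ab^{-1}$ under the natural partial order; once these are pinned down, the remaining implications are short and formal.
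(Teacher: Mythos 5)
Your proof is correct: every implication checks out, and the background facts you invoke ($\ker\sigma=E_{\scriptscriptstyle S}\omega$, the description of $\sigma$ via $ab^{-1}\in\ker\sigma$, and the characterization of Clifford inverse semigroups by $xx^{-1}=x^{-1}x$) are all standard. For comparison, note that the paper never proves this lemma at all---it is imported from Billhardt \cite{closure}---so the only in-paper proof to measure against is that of Proposition \ref{kercliff}, which generalizes the lemma by replacing $\sigma$ with $\alpha_n$ (the lemma is the case $n=1$, since $\alpha_1=\sigma$ and $\ker\alpha_1=E_{\scriptscriptstyle S}\omega$). Relative to that proof, your route differs both in shape and in mechanism. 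The paper runs the cycle $(1)\Rightarrow(2)\Rightarrow(3)\Rightarrow(4)\Rightarrow(1)$ together with $(3)\Rightarrow(5)\Rightarrow(1)$: its $(1)\Rightarrow(2)$ is an inequality chain exploiting the fact that elements of a full Clifford subsemigroup centralize all idempotents, and its $(4)\Rightarrow(1)$ is a direct computation that multiplies $a^{-1}a\,(\sigma\cap\mathcal{L})\,a$ through by $aa^{-1}$ and uses that an idempotent $\mathcal{L}$-related to $a$ must equal $a^{-1}a$. You instead use $(1)$ and $(3)$ as hubs, with two devices: the single witness $c=ab^{-1}\in\ker\sigma$ and the Clifford identity $cc^{-1}=c^{-1}c$, whose sides you correctly identify as $aa^{-1}$ and $b(a^{-1}a)b^{-1}\leq bb^{-1}$; and, for $(4)\Rightarrow(3)$, the observation that a congruence contained in $\mathcal{L}$ separates idempotents and hence lies in $\mathcal{H}$, combined with inversion to swap $\mathcal{L}$ and $\mathcal{R}$. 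That second device is precisely the argument the paper reserves for its implication $(4)\Rightarrow(6)$ (that $\alpha_n\cap\mathcal{L}=\alpha_n\cap\mu$), so in effect you have rerouted the equivalence through a condition the paper treats separately. What your version buys is uniformity---one element and one identity drive both $(1)\Leftrightarrow(2)$ and $(1)\Leftrightarrow(3)$---and it generalizes to $\alpha_n$ essentially unchanged, since $a\,\alpha_n\,b$ still yields $ab^{-1}\in\ker\alpha_n$ and $a\in\ker\alpha_n$ still yields $a\,\alpha_n\,a^{-1}a$; what the paper's version buys is that it never needs the global identity $xx^{-1}=x^{-1}x$, working instead directly with centrality of idempotents in the kernel.
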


Properties of congruences obtained by starting with $\omega$ and successively forming $\rho_t$ and $\rho_k$ were first studied by Petrich --
Reilly \cite{network}.

\begin{definition}\textup{(\cite[Definition 5.1]{network})}\label{not}
 On $S$ we define inductively the following two sequences of congruences:
 \begin{eqnarray*}
  &&\alpha_0=\omega=\beta_0,\\
  &&\alpha_n=(\beta_{n-1})_t, \quad \beta_n=(\alpha_{n-1})_k \quad \text{for~} n \geqslant 1.
 \end{eqnarray*}
 We call the aggregate $\{\alpha_n, \beta_n\}_{n=0}^\infty$, together with the inclusion relation for congruences, the \emph{min network} of
 congruences on $S$.
\end{definition}

The min network is related to the following family of implications.
\begin{definition}\textup{(\cite[Definition 5.2]{network})}
 An inverse semigroup $S$ might satisfy one of the following implications:\\
 (A$_0$) $x=y$; (A$_1$) $x^{-1}x=y^{-1}y$; (A$_2$) $y \in E\zeta$;\\
 (A$_n$) $xy=x$, $x\,\beta_{n-3}\,y \Rightarrow y \in E\zeta$, $n \geqslant 3$;\\
 (B$_0$) $x=y$; (B$_1$) $y \in E$;\\
 (B$_n$) $xy=x$, $x\,\beta_{n-2}\,y \Rightarrow y \in E$, $n \geqslant 2$.
\end{definition}

The next few results develop some basic facts about the min network.

\begin{lemma}
 (1) \textup{(\cite[Proposition 5.3]{network})} For $n \geqslant 1$, we have $\alpha_{n-1} \cap \beta_{n-1}=\alpha_n \vee \beta_n$;\\
 (2) \textup{(\cite[Proposition 5.4]{network})} the min network, together with the intersections of corresponding pairs, is a sublattice of
 $\mathcal{C}(S)$.
\end{lemma}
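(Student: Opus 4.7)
The plan is to exploit two tools from earlier in the excerpt: the fact that a congruence is uniquely determined by its trace-kernel pair (the opening Lemma), and the characterizations $\rho_t=(\rho\cap\mathcal{F})^*$ and $\rho_k=(\rho\cap\mathcal{L})^*$ of Lemma \ref{min}. From the latter, monotonicity of both operators $(\,\cdot\,)_t$ and $(\,\cdot\,)_k$ on $\mathcal{C}(S)$ drops out at once: $\rho\subseteq\theta$ gives $\rho\cap\mathcal{F}\subseteq\theta\cap\mathcal{F}$, and the $*$-closure preserves inclusion. Combined with the recursion $\alpha_n=(\beta_{n-1})_t$, $\beta_n=(\alpha_{n-1})_k$, a simultaneous induction on $n$ then yields that $\{\alpha_n\}$ and $\{\beta_n\}$ are descending chains; the base $n=0$ is trivial since $\alpha_0=\beta_0=\omega$, and the step is $\alpha_{n+1}=(\beta_n)_t\subseteq(\beta_{n-1})_t=\alpha_n$ by monotonicity and the inductive hypothesis $\beta_n\subseteq\beta_{n-1}$, with the symmetric step for the $\beta$-chain.

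For part (1), the forward inclusion $\alpha_n\vee\beta_n\subseteq\alpha_{n-1}\cap\beta_{n-1}$ is immediate from $\alpha_n\subseteq\beta_{n-1}$ (definition) and $\alpha_n\subseteq\alpha_{n-1}$ (descending chain), together with the symmetric statements for $\beta_n$. For the reverse inclusion I would verify equality of traces and kernels. Using $\tr\alpha_n=\tr\beta_{n-1}$ together with the sandwich $\alpha_n\subseteq\alpha_n\vee\beta_n\subseteq\alpha_{n-1}\cap\beta_{n-1}\subseteq\beta_{n-1}$, one obtains $\tr\beta_{n-1}=\tr\alpha_n\subseteq\tr(\alpha_n\vee\beta_n)\subseteq\tr(\alpha_{n-1}\cap\beta_{n-1})\subseteq\tr\beta_{n-1}$, forcing equality throughout; the dual chain on kernels uses $\ker\beta_n=\ker\alpha_{n-1}$ and $\beta_n\subseteq\alpha_n\vee\beta_n\subseteq\alpha_{n-1}\cap\beta_{n-1}\subseteq\alpha_{n-1}$. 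Since two congruences agreeing on trace and kernel must coincide, this closes the loop.

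For part (2) I would observe that the network decomposes into the two descending chains $\{\alpha_n\}$, $\{\beta_n\}$ and the meets $\{\alpha_n\cap\beta_n\}$. For $n>m$, the inclusions $\alpha_n\subseteq\beta_{n-1}\subseteq\beta_m$ and dually $\beta_n\subseteq\alpha_m$ show that every off-diagonal pair is comparable, while the only a priori incomparable pair $(\alpha_n,\beta_n)$ has meet already in the network by construction and join equal to $\alpha_{n-1}\cap\beta_{n-1}$ by part (1). A short case check involving the meet-members $\alpha_n\cap\beta_n$ then certifies closure under $\vee$ and $\cap$. The conceptual hard point is really located in Step 1: monotonicity of $(\,\cdot\,)_t$ and $(\,\cdot\,)_k$ is not at all transparent from their primitive definition as least congruences with a prescribed trace or kernel, because enlarging the operand enlarges the target trace or kernel and it is unclear why the corresponding least element should follow suit. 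Once the characterizations of Lemma \ref{min} are invoked this objection evaporates and everything else becomes disciplined bookkeeping with the uniqueness principle.
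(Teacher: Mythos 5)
Your proof is correct, but there is nothing in this paper to compare it against: the lemma is imported verbatim from Petrich--Reilly (Propositions 5.3 and 5.4 of \cite{network}) and the paper supplies no proof of its own. What you have written is a self-contained reconstruction using only facts the paper does state, and it is sound. The monotonicity of $(\,\cdot\,)_t$ and $(\,\cdot\,)_k$ via Lemma \ref{min}, the containments $\rho_t \subseteq \rho$ and $\rho_k \subseteq \rho$, the equalities $\tr{\rho_t}=\tr{\rho}$ and $\ker{\rho_k}=\ker{\rho}$, and the determination of a congruence by its trace--kernel pair are all available in the preliminaries, and your sandwich argument in part (1) uses them exactly as intended: $\tr{\alpha_n}=\tr{\beta_{n-1}}$ squeezes the traces of $\alpha_n \vee \beta_n$ and $\alpha_{n-1}\cap\beta_{n-1}$ to equality, $\ker{\beta_n}=\ker{\alpha_{n-1}}$ does the same for the kernels, and uniqueness finishes it; this is in substance the original Petrich--Reilly argument. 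In part (2) your ``short case check'' is compressed but genuinely closes: for $m<n$ one has $\alpha_n \subseteq \alpha_m$, $\alpha_n \subseteq \beta_{n-1} \subseteq \beta_m$, $\beta_n \subseteq \alpha_{n-1} \subseteq \alpha_m$, $\beta_n \subseteq \beta_m$, hence also $\alpha_n,\,\beta_n,\,\alpha_n\cap\beta_n \subseteq \alpha_m \cap \beta_m$ and $\alpha_n \cap \beta_n \subseteq \alpha_m,\,\beta_m$, so every pair in the extended family except $(\alpha_n,\beta_n)$ is comparable and contributes nothing to check; the one incomparable pair has meet $\alpha_n\cap\beta_n$ in the family by construction and join $\alpha_{n-1}\cap\beta_{n-1}$ in the family by part (1). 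Your closing observation is also well taken: monotonicity of the lower operators is not transparent from the ``least congruence with prescribed trace/kernel'' definition, and routing it through Lemma \ref{min} is the right way to make the induction on the two chains legitimate.
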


The quotients $S/\omega_t$, $S/\omega_k$, $\cdots$, as $S$ runs over all inverse semigroups, form quasivarieties.

\begin{lemma}\textup{(\cite[Theorem 5.5]{network})}\label{fres}
 (1) $\alpha_n$ is the minimum congruence $\rho$ on $S$ such that $S/\rho$ satisfies (A$_n$);\\
 (2) $\beta_n$ is the minimum congruence $\rho$ on $S$ such that $S/\rho$ satisfies (B$_n$).
\end{lemma}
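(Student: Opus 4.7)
The plan is to prove (1) and (2) by simultaneous induction on $n$. The base cases are direct. At $n=0$ we have $\alpha_0=\beta_0=\omega$, which produces the one-element quotient; (A$_0$) and (B$_0$) reduce to $x=y$, trivially satisfied. At $n=1$, $\alpha_1=\omega_t$ is the least group congruence on $S$, so $S/\alpha_1$ is a group and satisfies $x^{-1}x=y^{-1}y$ identically, which is (A$_1$); dually $\beta_1=\omega_k$ makes $S/\beta_1$ a semilattice, giving (B$_1$). At $n=2$, $\alpha_2=(\beta_1)_t=\nu$ is the least Clifford congruence, and being Clifford is equivalent to $y\in E\zeta$ for every $y$, which is (A$_2$); $\beta_2=(\alpha_1)_k=\pi$ is the least $E$-unitary congruence, and (B$_2$) is the defining implication of $E$-unitarity.

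For the inductive step of (1) with $n\geq 3$, use $\alpha_n=(\beta_{n-1})_t=(\beta_{n-1}\cap\mathcal{C})^*$ from Lemma \ref{min}. To show $S/\alpha_n$ satisfies (A$_n$), I would take $\bar a,\bar b\in S/\alpha_n$ with $\bar a\bar b=\bar a$ and $\bar a\,\beta_{n-3}(S/\alpha_n)\,\bar b$. By the Lattice Isomorphism Theorem, $\beta_{n-3}(S/\alpha_n)=\theta/\alpha_n$ for a unique congruence $\theta$ on $S$ with $\alpha_n\subseteq\theta$, and $(S/\alpha_n)/\beta_{n-3}(S/\alpha_n)\cong S/\theta$ satisfies (B$_{n-3}$) by the induction hypothesis. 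Combining this quotient identity with $\bar a\bar b=\bar a$ and unravelling through the representatives in $S$ extracts $\bar b\in E\zeta(S/\alpha_n)$. For minimality, given a congruence $\rho$ with $S/\rho$ satisfying (A$_n$), the goal is to produce the inclusion $\beta_{n-1}\subseteq\rho^T$, so that $\alpha_n=(\beta_{n-1})_t\subseteq(\rho^T)_t=\rho_t\subseteq\rho$; the induction hypothesis applied to $\rho^T$ identifies it as a congruence whose quotient lies above the free object defining $\beta_{n-1}$, and (A$_n$) supplies exactly the trace-level comparison required. The dual argument, via $\beta_n=(\alpha_{n-1})_k=(\alpha_{n-1}\cap\mathcal{L})^*$ and the $\mathcal{K}$-class structure, handles (2).

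The main obstacle is the self-referential character of the implications: (A$_n$) and (B$_n$) invoke $\beta_{n-3}$ and $\beta_{n-2}$ \emph{in the quotient semigroup}, not in $S$. Propagating the induction hypothesis therefore requires careful bookkeeping of how these congruences transfer under the lattice correspondence between $\mathcal{C}(S/\rho)$ and the interval $[\rho,\omega]$ of $\mathcal{C}(S)$, and in particular one must argue that the intrinsically defined $\beta_{n-3}(S/\alpha_n)$ pulls back to precisely the congruence whose (B$_{n-3}$)-quotient one already controls on $S$. A secondary point is verifying that the network is genuinely decreasing, so that $\alpha_n\subseteq\beta_{n-3}$ and $\beta_n\subseteq\beta_{n-2}$: both follow from $\rho_t,\rho_k\subseteq\rho$ together with the preceding lemma on the intersection $\alpha_{n-1}\cap\beta_{n-1}=\alpha_n\vee\beta_n$, which guarantees that the lifted congruences sit in the correct intervals needed to apply the inductive hypothesis.
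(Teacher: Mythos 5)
This lemma is not proved in the paper at all: it is imported verbatim from Petrich--Reilly \cite[Theorem 5.5]{network}, and everything in Section 2 (Lemma \ref{referee}, Remark \ref{beta}, Propositions \ref{kercliff}--\ref{boeuc}, Theorems \ref{kccex}, \ref{boeuex}, \ref{net}) is built on top of it. So your argument has to stand on its own, and it does not: both halves of your inductive step are assertions rather than proofs. Your scaffolding is fine --- the base cases, the fact that the network is decreasing (indeed $\alpha_n=(\beta_{n-1})_t\subseteq\beta_{n-1}=(\alpha_{n-2})_k\subseteq\alpha_{n-2}=(\beta_{n-3})_t\subseteq\beta_{n-3}$, with no need for the intersection lemma), and the transfer $\beta_{n-3}(S/\alpha_n)=\beta_{n-3}/\alpha_n$ obtained from the induction hypothesis plus the lattice correspondence are all correct. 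The first genuine gap is the claim that $S/\alpha_n$ satisfies (A$_n$). Your proposed mechanism --- pass to $(S/\alpha_n)/\beta_{n-3}(S/\alpha_n)\cong S/\beta_{n-3}$, which satisfies (B$_{n-3}$) by induction, and ``unravel through representatives'' --- cannot work: the hypotheses $\bar a\bar b=\bar a$ and $\bar a\,\beta_{n-3}(S/\alpha_n)\,\bar b$ say precisely that the images of $\bar a$ and $\bar b$ in $S/\beta_{n-3}$ are equal and idempotent, and applying the quasi-identity (B$_{n-3}$) to a pair of equal idempotent elements is vacuous --- it returns information you already have and says nothing about $\bar b$ lying in $E\zeta(S/\alpha_n)$. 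What is needed here is an argument that actually exploits the construction $\alpha_n=(\beta_{n-1})_t$, for instance through Lemma 1.2 (if $a\,(\beta_{n-1})_t\,b$ then $ae=be$ for some idempotent $e$ with $e\,\beta_{n-1}\,a^{-1}a\,\beta_{n-1}\,b^{-1}b$) or through kernel-is-Clifford machinery of the sort occupying the long proof of Proposition \ref{kercliff}; this is the heart of Petrich--Reilly's theorem and cannot be waved through.

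The second gap is minimality. Reducing the problem to $\beta_{n-1}\subseteq\rho^T$ is the right move, and the chain $\alpha_n=(\beta_{n-1})_t\subseteq(\rho^T)_t=\rho_t\subseteq\rho$ is sound (monotonicity of the operator $t$ follows from Lemma \ref{min}). But the justification you offer --- that the induction hypothesis ``identifies $\rho^T$ as a congruence whose quotient lies above the free object defining $\beta_{n-1}$'' --- is not an argument: no free object is in play, and the induction hypothesis says nothing about $\rho^T$. Concretely, writing $\theta$ for the congruence on $S$ with $\theta/\rho=\beta_{n-1}(S/\rho)$, the induction hypothesis gives $\beta_{n-1}\subseteq\theta$, so what must be proved is $\theta\subseteq\rho^T$, i.e.\ that $\beta_{n-1}(S/\rho)$ is idempotent separating whenever $S/\rho$ satisfies (A$_n$). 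That implication --- from the quasi-identity $xy=x$, $x\,\beta_{n-3}\,y\Rightarrow y\in E\zeta$ to $\beta_{n-1}(\cdot)\subseteq\mu(\cdot)$ --- is the real content of the theorem (compare the chain (12)$\Rightarrow\cdots\Rightarrow$(8) in Proposition \ref{kercliff}, proved there for the primed implications), and you give no proof of it. Finally, note that you cannot plug either hole by citing Section 2 of this paper: Lemma \ref{referee}, Remark \ref{beta} and everything downstream presuppose Lemma \ref{fres}, so any such appeal is circular. A correct self-contained proof essentially has to reproduce Petrich--Reilly's argument in \cite{network}.
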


The first few levels of the min network are depicted in Figure \ref{original} (\cite{network}) together with some relationships and alternative
characterizations.

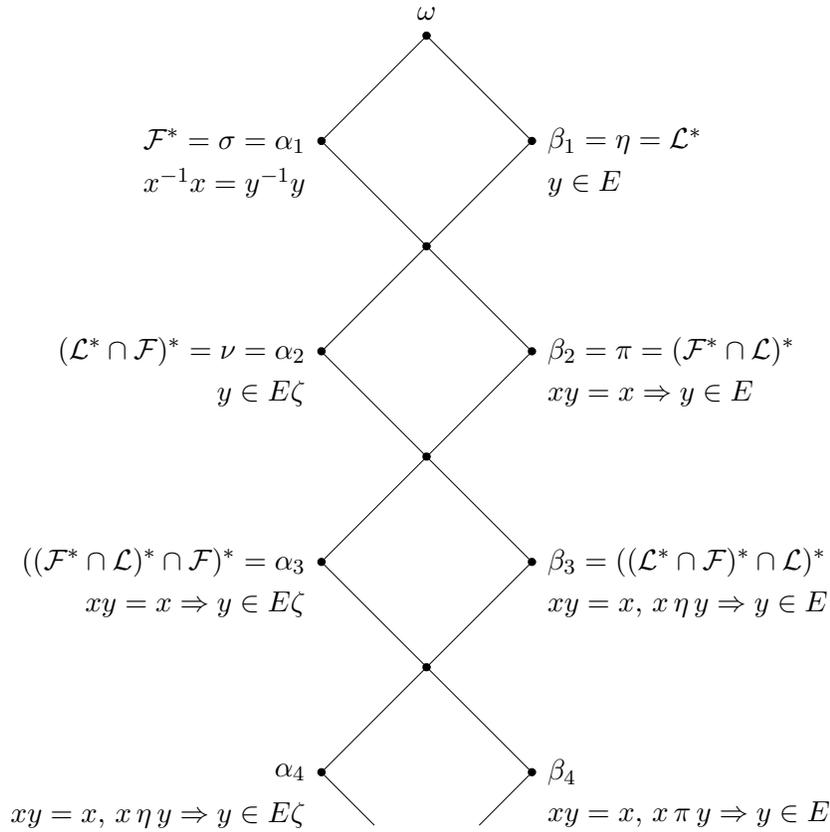
\begin{figure}[hbt]\label{original}
 \renewcommand*\figurename{Figure}
 \renewcommand*\captionlabeldelim{}
 \setlength{\unitlength}{0.7cm}
 \begin{center}
  \begin{picture}(4,16)
   \drawline(1,0)(0,1)(4,5)(0,9)(4,13)(2,15)(0,13)(4,9)(0,5)(4,1)(3,0)
   \allinethickness{0.7mm}
   \put(0,1){\circle*{0.06}} \put(0,5){\circle*{0.06}} \put(0,9){\circle*{0.06}} \put(0,13){\circle*{0.06}} \put(2,15){\circle*{0.06}}
   \put(2,11){\circle*{0.06}} \put(2,7){\circle*{0.06}} \put(2,3){\circle*{0.06}} \put(4,1){\circle*{0.06}} \put(4,5){\circle*{0.06}}
   \put(4,9){\circle*{0.06}} \put(4,13){\circle*{0.06}}
   \put(2,15.3){\makebox(0,0)[b]{$\omega$}}
   \put(-0.3,13){\makebox(0,0)[r]{$\mathcal{F}^*=\sigma=\alpha_1$}}
   \put(-0.3,9){\makebox(0,0)[r]{$(\mathcal{L}^* \cap \mathcal{F})^*=\nu=\alpha_2$}}
   \put(-0.3,5){\makebox(0,0)[r]{$((\mathcal{F}^* \cap \mathcal{L})^* \cap \mathcal{F})^*=\alpha_3$}}
   \put(-0.3,1){\makebox(0,0)[r]{$\alpha_4$}}
   \put(4.3,13){\makebox(0,0)[l]{$\beta_1=\eta=\mathcal{L}^*$}}
   \put(4.3,9){\makebox(0,0)[l]{$\beta_2=\pi=(\mathcal{F}^* \cap \mathcal{L})^*$}}
   \put(4.3,5){\makebox(0,0)[l]{$\beta_3=((\mathcal{L}^* \cap \mathcal{F})^* \cap \mathcal{L})^*$}}
   \put(4.3,1){\makebox(0,0)[l]{$\beta_4$}}
   \put(-0.3,12.2){\makebox(0,0)[r]{$x^{-1}x=y^{-1}y$}}
   \put(-0.3,8.2){\makebox(0,0)[r]{$y \in E\zeta$}}
   \put(-0.3,4.2){\makebox(0,0)[r]{$xy=x \Rightarrow y \in E\zeta$}}
   \put(-0.3,0.2){\makebox(0,0)[r]{$xy=x$, $x\,\eta\,y \Rightarrow y \in E\zeta$}}
   \put(4.3,12.2){\makebox(0,0)[l]{$y \in E$}}
   \put(4.3,8.2){\makebox(0,0)[l]{$xy=x \Rightarrow y \in E$}}
   \put(4.3,4.2){\makebox(0,0)[l]{$xy=x$, $x\,\eta\,y \Rightarrow y \in E$}}
   \put(4.3,0.2){\makebox(0,0)[l]{$xy=x$, $x\,\pi\,y \Rightarrow y \in E$}}
  \end{picture}
  \caption{\quad The first few levels of the min network} \label{origin1}
 \end{center}
\end{figure}

\section{Characterizations of $\alpha_{n+2}$ and $\beta_{n+2}$}

We will now develop characterizations of the congruences $\alpha_{n+2}$ and $\beta_{n+2}$ for any natural number $n$ on $S$. After defining
$\ker{\alpha_n}$-is-Clifford semigroups and $\beta_n$-is-over-$E$-unitary semigroups, we provide some equivalent conditions in terms of
implications as well as congruences. We then characterize $\ker{\alpha_n}$-is-Clifford congruences and $\beta_n$-is-over-$E$-unitary
congruences on an inverse semigroup $S$ and prove that they form a complete $\cap$-subsemilattice of the lattice of all congruences on $S$ with
least element $\alpha_{n+2}$ and $\beta_{n+2}$ respectively.

\begin{definition}
 An inverse semigroup for which $\ker{\alpha_n}$ is a Clifford [resp. $E$-reflexive] semigroup is called a \emph{$\ker{\alpha_n}$-is-Clifford}
 [resp. \emph{$\ker{\alpha_n}$-is-$E$-reflexive}] \emph{semigroup}. An inverse semigroup $S$ is called a \emph{$\beta_n$-is-over-$E$-unitary
 semigroup} if $e\beta_n$ is $E$-unitary for each $e \in E_{\scriptscriptstyle{S}}$. A congruence $\rho$ on $S$ is called a
 \emph{$\ker{\alpha_n}$-is-Clifford congruence} if $\ker{\alpha_n(S/\rho)}$ is a Clifford semigroup. A congruence $\rho$ on $S$ is called a
 \emph{$\beta_n$-is-over-$E$-unitary congruence} if $\beta_n(S/\rho)$ is over $E$-unitary semigroups.
\end{definition}

We shall need some auxiliary results first.

\begin{lemma}\label{equivalent}
 For $n \geqslant 2$, semigroups satisfying (B$_n$) are exactly $\beta_{n-2}$-is-over-$E$-unitary semigroups.
\end{lemma}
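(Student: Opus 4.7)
The plan is to unpack both sides of the equivalence and show that the hypotheses of (B$_n$) are exactly what is needed to localize the argument inside the $\beta_{n-2}$-class of some idempotent, where the $E$-unitary condition applies. Two background observations drive everything: first, since $\beta_{n-2}$ is a congruence, the class $e\beta_{n-2}$ of any $e \in E_{\scriptscriptstyle{S}}$ is an inverse subsemigroup of $S$; second, by the characterization of $E$-unitarity recorded in Section 1, a subsemigroup $T$ is $E$-unitary precisely when $xy=x$ with $x,y \in T$ forces $y^2=y$.

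For the direction (B$_n$)$\Rightarrow$ $\beta_{n-2}$-is-over-$E$-unitary, I would fix $e \in E_{\scriptscriptstyle{S}}$ and take $x,y \in e\beta_{n-2}$ with $xy=x$. Then $x\,\beta_{n-2}\,e\,\beta_{n-2}\,y$, so (B$_n$) yields $y \in E_{\scriptscriptstyle{S}}$. Hence $y$ belongs to $E_{\scriptscriptstyle{S}} \cap e\beta_{n-2}$, which shows that $e\beta_{n-2}$ is $E$-unitary.

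The converse is the slightly more substantial direction. Given $x,y \in S$ with $xy=x$ and $x\,\beta_{n-2}\,y$, the goal is to produce an idempotent $e$ such that both $x$ and $y$ lie in $e\beta_{n-2}$, so that the assumed $E$-unitarity of $e\beta_{n-2}$ can be invoked. Passing to the inverse semigroup $S/\beta_{n-2}$ and writing $\overline{a}$ for the class of $a$, the hypotheses give $\overline{x}=\overline{y}$ and $\overline{x}\,\overline{y}=\overline{x}$, hence $\overline{x}^{\,2}=\overline{x}$. Since in any inverse semigroup an idempotent equals $\overline{x^{-1}x}$, this yields $x\,\beta_{n-2}\,x^{-1}x$ and then $y\,\beta_{n-2}\,x^{-1}x$. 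Setting $e=x^{-1}x$, the $E$-unitarity of the subsemigroup $e\beta_{n-2}$ applied to the relation $xy=x$ forces $y^{2}=y$, giving $y \in E_{\scriptscriptstyle{S}}$ as required.

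The only conceptual point is the reduction of an arbitrary pair $(x,y)$ satisfying (B$_n$) to a pair inside an idempotent $\beta_{n-2}$-class, via the quotient argument $\overline{x}^{\,2}=\overline{x}$. Beyond this, both directions are direct translations of the definitions, and I do not anticipate any serious obstacle.
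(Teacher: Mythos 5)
Your proof is correct and follows essentially the same route as the paper's: the forward direction is the same direct application of (B$_n$) inside an idempotent class, and your converse---showing $\overline{x}$ is idempotent in $S/\beta_{n-2}$ and landing both $x$ and $y$ in the class of $e=x^{-1}x$---is the paper's argument with the lifting of the idempotent class made explicit. No gaps.
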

\begin{proof}
 First suppose that $S$ satisfies (B$_n$) and let $x$, $y \in e\beta_{n-2}$ with $xy=x$. Then it is clear from the assumption that $y \in E$,
 that is, $e\beta_{n-2}$ is $E$-unitary.

 Conversely, suppose that $S$ is a $\beta_{n-2}$-is-over-$E$-unitary semigroup, and let $xy=x$ with $x\,\beta_{n-2}\,y$. Then
 $x=xy\,\beta_{n-2}\,y^2\,\beta_{n-2}\,x^2$ and $x\beta_{n-2} \in E(S/\beta_{n-2})$. Using our assumption we find that $x\beta_{n-2}$ is
 $E$-unitary whence $y \in E$.
\end{proof}

\begin{remark}\label{beta}
 Lemma \ref{fres} and Lemma \ref{equivalent} show that $\beta_n$ is the least $\beta_{n-2}$-is-over-$E$-unitary congruence.
\end{remark}

Let $\mathcal{A}_n$ denote the set of all congruences $\gamma$ on $S$ such that the kernel of $\alpha_n(S/\gamma)$ is a Clifford semigroup. Let
$\mathcal{B}_n$ denote the set of all congruences $\theta$ on $S$ such that $\beta_n(S/\theta)$ is over $E$-unitary semigroups.

\begin{lemma}\label{referee}
 For $n \geqslant 0$, $\mathcal{A}_n$ and $\mathcal{B}_n$ have least elements.
\end{lemma}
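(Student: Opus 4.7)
The plan is to identify the least elements explicitly: $\alpha_{n+2}$ for $\mathcal{A}_n$ and $\beta_{n+2}$ for $\mathcal{B}_n$, by promoting Lemma \ref{equivalent} (and its yet-to-be-established $\alpha$-dual) to a statement about congruences and then invoking Lemma \ref{fres}.

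The $\mathcal{B}_n$ case is essentially Remark \ref{beta} after shifting $n \mapsto n+2$: it then reads that $\beta_{n+2}$ is the least $\beta_n$-is-over-$E$-unitary congruence on $S$, and unwinding the definitions this is precisely $\beta_{n+2} = \min \mathcal{B}_n$. Only two facts need to be verified, namely $\beta_{n+2} \in \mathcal{B}_n$ (from Lemma \ref{equivalent} applied inside $S/\beta_{n+2}$, which satisfies (B$_{n+2}$)) and minimality (from Lemma \ref{fres}(2) combined with Lemma \ref{equivalent}).

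For the $\mathcal{A}_n$ case, I would first prove a dual of Lemma \ref{equivalent}, namely that for $n \geq 2$ a semigroup satisfies (A$_n$) iff it is $\ker\alpha_{n-2}$-is-Clifford. Given this dual, the same reasoning as above together with Lemma \ref{fres}(1) forces $\alpha_{n+2} = \min \mathcal{A}_n$. In one direction of the dual, supposing $\ker\alpha_n(S)$ is Clifford and taking $x, y$ with $xy = x$ and $x \, \beta_{n-1} \, y$, one uses $xy = x$ to place $y$ above the idempotent $x^{-1}x$, and then uses $x \, \beta_{n-1} \, y$ together with the lattice identity $\alpha_{n-1} \cap \beta_{n-1} = \alpha_n \vee \beta_n$ to push $y$ into $\ker\alpha_n$; Clifford-ness of $\ker\alpha_n$ then gives $y \in E_S\zeta$. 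The converse takes $a \in \ker\alpha_n$ and an arbitrary $e \in E_S$, builds a suitable pair $(x,y)$ from $a$ and $e$ satisfying the hypothesis of (A$_{n+2}$), and concludes $y \in E_S\zeta$, hence $ae = ea$.

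The main obstacle is the forward step of the dual --- deducing $y \in \ker\alpha_n$ from $xy = x$ and $x \, \beta_{n-1} \, y$ --- because the $E_S\zeta$ conclusion in (A$_{n+2}$) is weaker than the idempotency conclusion in (B$_{n+2}$), so the manipulation is not quite as formal as in the $\beta$-case and relies on the specific trace/kernel structure of the min network. Should this explicit identification turn out to be too delicate at this stage of the paper, a fallback is to establish existence non-constructively: $\omega$ lies trivially in both $\mathcal{A}_n$ and $\mathcal{B}_n$ (since $S/\omega$ is a singleton), and both classes are closed under arbitrary intersections because $S / \bigcap_i \gamma_i$ embeds subdirectly into $\prod_i S/\gamma_i$ and both properties ``$\ker\alpha_n$ is Clifford'' and ``$\beta_n$ is over $E$-unitary'' are preserved under subdirect products and inverse subsemigroups; then $\bigcap_{\gamma \in \mathcal{A}_n}\gamma$ and $\bigcap_{\theta \in \mathcal{B}_n}\theta$ are the desired least elements, leaving the explicit identification with $\alpha_{n+2}$ and $\beta_{n+2}$ for the subsequent theorems.
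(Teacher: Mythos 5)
Your fallback argument is, in essence, the paper's own proof of Lemma \ref{referee}: the paper notes $\omega \in \mathcal{A}_n$ and then shows $\mathcal{A}_n$ is closed under arbitrary intersections by embedding $S/(\bigcap_{\rho \in \mathcal{G}}\rho)$ subdirectly into $\prod_{\rho \in \mathcal{G}} S/\rho$, and the same for $\mathcal{B}_n$. One caution, though: your assertion that the properties ``$\ker{\alpha_n}$ is Clifford'' and ``$\beta_n$ is over $E$-unitary'' are ``preserved under subdirect products and inverse subsemigroups'' is true but is not automatic, because $\alpha_n$ and $\beta_n$ are not restriction-stable --- for an inverse subsemigroup $T$ of $P$, $\alpha_n(T)$ need not be $\alpha_n(P)|_T$. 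Proving these preservation facts is precisely where the paper's proof does its work: it routes through the implications (A$_n$), whose models form a quasivariety by Petrich--Reilly, and then uses the minimality in Lemma \ref{fres} to get $\alpha_n(\prod_{\rho} S/\rho) \subseteq \prod_{\rho}\alpha_n(S/\rho)$, hence containment of kernels. So your fallback states the crux as an assumption rather than proving it, but the intended argument is the correct one and coincides with the paper's.

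Your primary route is genuinely different and would be more informative, since it identifies the least elements as $\alpha_{n+2}$ and $\beta_{n+2}$ outright (anticipating Theorems \ref{kccex} and \ref{boeuex}). The $\mathcal{B}_n$ half is indeed immediate from Remark \ref{beta}. The $\mathcal{A}_n$ half needs the dual of Lemma \ref{equivalent} (semigroups satisfying (A$_{n+2}$) are exactly the $\ker{\alpha_n}$-is-Clifford semigroups), which is a true statement, but the mechanism you propose for its forward step fails: from $xy=x$ and $x\,\beta_{n-1}\,y$ you can place $y$ in $\ker{\alpha_1} \cap \ker{\beta_{n-1}}$, while the lattice identity $\alpha_{n-1} \cap \beta_{n-1}=\alpha_n \vee \beta_n$ only yields containments in the unhelpful direction ($\ker{\alpha_1} \cap \ker{\beta_{n-1}} \supseteq \ker(\alpha_{n-1} \cap \beta_{n-1})=\ker(\alpha_n \vee \beta_n) \supseteq \ker{\alpha_n}$, and the last inclusion is strict in general), so it cannot ``push $y$ into $\ker{\alpha_n}$.'' What does work is the trace description of $\rho_t$ from Section 1: $a\,\rho_t\,b$ iff $ae=be$ for some idempotent $e$ with $e\,\rho\,a^{-1}a\,\rho\,b^{-1}b$. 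From $xy=x$ one gets $x^{-1}x \leq y$, hence $yx^{-1}x=x^{-1}x$, and from $x\,\beta_{n-1}\,y$ one gets $x^{-1}x\,\beta_{n-1}\,y^{-1}y$; taking $e=x^{-1}x$ as the witnessing idempotent gives $y\,(\beta_{n-1})_t\,x^{-1}x$, that is, $y \in \ker{\alpha_n}$, and then fullness together with Clifford-ness of $\ker{\alpha_n}$ puts $y$ in $E_S\zeta$. With that repair your primary route is sound; what it buys is the explicit identification at this early stage, whereas the paper's intersection argument buys a form that relativizes verbatim to congruences containing a fixed $\rho$, which the paper needs later to define $(\beta_{n+2})_\rho$ and $(\alpha_{n+2})_\rho$.
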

\begin{proof}
 Since the kernel of $\alpha_n(S/\omega)$ is trivial, it follows that $\omega \in \mathcal{A}_n$ so that $\mathcal{A}_n \neq \emptyset$.

 Suppose that $\mathcal{G}$ is a nonempty family of $\ker{\alpha_n}$-is-Clifford congruences. It follows from Lemma \ref{fres} that the semigroups $(S/\rho)/(\alpha_n(S/\rho))$ ($\rho \in \mathcal{G}$) all satisfy the implications in (A$_n$), hence so also does their direct product $\prod\limits_{\rho \in \mathcal{G}} (S/\rho)/(\alpha_n(S/\rho))$ as well as any subdirect product of $\prod\limits_{\rho \in \mathcal{G}} (S/\rho)/(\alpha_n(S/\rho))$.

 Let $\gamma$ denote the product of congruences $\prod\limits_{\rho \in \mathcal{G}} \alpha_n(S/\rho)$. Then $\gamma$ is a congruence on $\prod\limits_{\rho \in \mathcal{G}} S/\rho$. Now $S/(\bigcap\limits_{\rho \in \mathcal{G}} \rho)$ is (isomorphic to) a subdirect product of $\prod\limits_{\rho \in \mathcal{G}} S/\rho$ and therefore $\gamma$ induces a congruence on $S/(\bigcap\limits_{\rho \in \mathcal{G}} \rho)$. In addition, we have $$(\prod\limits_{\rho \in \mathcal{G}} S/\rho)/(\prod\limits_{\rho \in \mathcal{G}} \alpha_n(S/\rho)) \simeq \prod\limits_{\rho \in \mathcal{G}} (S/\rho)/(\alpha_n(S/\rho))$$ which also satisfies (A$_n$). But $\alpha_n(\prod\limits_{\rho \in \mathcal{G}} S/\rho)$ is the least such congruence by Lemma \ref{fres} and therefore $\alpha_n(\prod\limits_{\rho \in \mathcal{G}} S/\rho) \subseteq \prod\limits_{\rho \in \mathcal{G}} \alpha_n(S/\rho)$ so that $\ker{\alpha_n(\prod\limits_{\rho \in \mathcal{G}} S/\rho)} \subseteq \ker{(\prod\limits_{\rho \in \mathcal{G}} \alpha_n(S/\rho))}=\prod\limits_{\rho \in \mathcal{G}} \ker{(\alpha_n(S/\rho))}$. However, the kernels of $\alpha_n(S/\rho)$ ($\rho \in \mathcal{G}$) are Clifford semigroups. This implies that the kernel of $\prod\limits_{\rho \in \mathcal{G}} \alpha_n(S/\rho)$ is also a Clifford semigroup and therefore so also is the kernel of $\alpha_n(S/(\bigcap\limits_{\rho \in \mathcal{G}} \rho))$. Therefore $\bigcap\limits_{\rho \in \mathcal{G}} \rho \in \mathcal{A}_n$. In other words, the set of congruences on $S$ for which the quotient is a $\ker{\alpha_n}$-is-Clifford semigroup is closed under arbitrary intersections. Therefore there exists a least such congruence, that is, $\mathcal{A}_n$ has a least element. A similar argument establishes the assertion concerning $\mathcal{B}_n$ and so the proof is complete.
\end{proof}

We are now ready for characterizations of $\ker{\alpha_n}$-is-Clifford inverse semigroups.

\begin{proposition}\label{kercliff}
 For $n \geqslant 1$, the following conditions on an inverse semigroup $S$ are equivalent.\\
 (1) $S$ is a $\ker{\alpha_n}$-is-Clifford semigroup;\\
 (2) $[\,a\,\alpha_n\,b~\text{and}~a^{-1}a \leq b^{-1}b\,] \Rightarrow aa^{-1} \leq bb^{-1}$;\\
 (3) $\alpha_n \cap \mathcal{L}=\alpha_n \cap \mathcal{R}$;\\
 (4) $\alpha_n \cap \mathcal{L}$ is a congruence;\\
 (5) $\alpha_n \cap \mathcal{R}$ is a congruence;\\
 (6) $\alpha_n \cap \mathcal{L}=\alpha_n \cap \mu$;\\
 (7) there exists an idempotent separating $\beta_{n-1}$-is-over-$E$-unitary congruence on $S$;\\
 (8) $\beta_{n+1} \subseteq \mu$;\\
 (9) $(\beta_{n+1})_t=\varepsilon$;\\
 (10) $\beta_{n+1} \cap \mathcal{F}=\varepsilon$;\\
 (11) $\ker{\alpha_n} \subseteq E_{\scriptscriptstyle{S}}\zeta$;\\
 (12) $S$ satisfies the implication $xy=x$, $x^{-1}x\,\alpha_n\,yy^{-1} \Rightarrow y \in E_{\scriptscriptstyle{S}}\zeta$.
\end{proposition}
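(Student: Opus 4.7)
The plan is to organize the twelve conditions into two clusters and establish the equivalences within each, then bridge them. The first cluster is (1)--(6), (11), (12), which all describe the Clifford-ness of $\ker{\alpha_n}$ and the commutation of its elements with $E_{\scriptscriptstyle{S}}$; the second cluster is (7)--(10), which express the same information at the level of the congruence $\beta_{n+1}=(\alpha_n)_k$. The clusters are connected through the equivalence (4)$\Leftrightarrow$(8).

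For the Billhardt-style block (1)--(5), I would mimic the proof of Lemma~1.4 with $\sigma$ replaced by $\alpha_n$, since the original argument uses nothing about $\sigma$ beyond its being a congruence with $E_{\scriptscriptstyle{S}}\subseteq\ker{\sigma}$. For (1)$\Rightarrow$(2), given $a\,\alpha_n\,b$ with $a^{-1}a\le b^{-1}b$, I would observe that $ab^{-1}\in\ker{\alpha_n}$ and, being in the Clifford semigroup, satisfies $(ab^{-1})(ab^{-1})^{-1}=(ab^{-1})^{-1}(ab^{-1})$, which simplify respectively to $aa^{-1}$ and $b(a^{-1}a)b^{-1}\le bb^{-1}$. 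For (2)$\Rightarrow$(3) I would apply (2) in both directions via the inversion symmetry of $\mathcal{L}$. The implications (3)$\Rightarrow$(4),(5) rely on $\mathcal{R}$ being a left congruence and $\mathcal{L}$ being a right congruence; (4)$\Rightarrow$(3) follows because $\alpha_n\cap\mathcal{L}$ being closed under inversion forces its pairs into $\mathcal{R}$; (3)$\Rightarrow$(1) comes from $c\,\alpha_n\,c^{-1}c\,\mathcal{L}\,c$ together with (3), yielding $cc^{-1}=c^{-1}c$ for every $c\in\ker{\alpha_n}$, so $\ker{\alpha_n}$ is completely regular inverse, hence Clifford. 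Condition (6) is equivalent to (3) via $\mu\subseteq\mathcal{H}=\mathcal{L}\cap\mathcal{R}$ and inversion. For (1)$\Leftrightarrow$(11) I would use that $\ker{\alpha_n}$ is full, so $E_{\ker{\alpha_n}}=E_{\scriptscriptstyle{S}}$, making Clifford-ness equivalent to $\ker{\alpha_n}\subseteq E_{\scriptscriptstyle{S}}\zeta$. For (11)$\Leftrightarrow$(12) the forward direction uses $xy=x\Rightarrow x^{-1}x\le y$, whence $y\cdot x^{-1}x=x^{-1}x$, so the hypothesis $x^{-1}x\,\alpha_n\,yy^{-1}$ propagates to $y\,\alpha_n\,x^{-1}x$, placing $y$ in $\ker{\alpha_n}\subseteq E_{\scriptscriptstyle{S}}\zeta$; for the reverse, given $a\in\ker{\alpha_n}$, I would take $x=(aa^{-1})(a^{-1}a)$ and $y=a$, so that $x\le a$ yields $xa=x$ and $aa^{-1}\,\alpha_n\,a^{-1}a$ yields $x^{-1}x=x\,\alpha_n\,aa^{-1}=yy^{-1}$.

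For the second cluster, (8)$\Leftrightarrow$(9) follows because $\beta_{n+1}\subseteq\mu$ is equivalent to triviality of $\tr{\beta_{n+1}}$ and hence to $(\beta_{n+1})_t=\varepsilon$; (9)$\Leftrightarrow$(10) is immediate from Lemma~\ref{min}; and (7)$\Leftrightarrow$(8) is immediate from Remark~\ref{beta}, which identifies $\beta_{n+1}$ as the least $\beta_{n-1}$-is-over-$E$-unitary congruence, so an idempotent separating such congruence exists iff $\beta_{n+1}$ itself is idempotent separating. The bridge (4)$\Leftrightarrow$(8) again uses Lemma~\ref{min}: $\beta_{n+1}=(\alpha_n)_k=(\alpha_n\cap\mathcal{L})^*=(\alpha_n\cap\mathcal{R})^*$. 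If (4) holds then $\alpha_n\cap\mathcal{L}$ is already a congruence, hence equals $\beta_{n+1}$, placing $\beta_{n+1}\subseteq\mathcal{L}$ and symmetrically $\beta_{n+1}\subseteq\mathcal{R}$, so $\beta_{n+1}\subseteq\mathcal{H}\subseteq\mu$, giving (8). Conversely (8) yields $\beta_{n+1}\subseteq\mu\subseteq\mathcal{L}$ and $\beta_{n+1}\subseteq\alpha_n$, squeezing $\beta_{n+1}\subseteq\alpha_n\cap\mathcal{L}\subseteq\beta_{n+1}$ and making $\alpha_n\cap\mathcal{L}=\beta_{n+1}$ a congruence.

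The main obstacle I anticipate is the bridge (4)$\Leftrightarrow$(8): it requires careful use of both presentations of $\beta_{n+1}$ from Lemma~\ref{min} together with the inclusion $\beta_{n+1}\subseteq\alpha_n$, without which the squeezing argument collapses. The Billhardt-style block becomes routine once one observes that the original proof is insensitive to the choice of congruence in place of $\sigma$, and the kernel-implication equivalences (11)--(12) reduce to short computations with the natural partial order.
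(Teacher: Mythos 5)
Most of your plan is sound: the two-cluster organisation, the bridge (4)$\Leftrightarrow$(8) via Lemma \ref{min}, the second cluster (7)--(10), and your variants of the Billhardt-style block (the Clifford-commutation computation for (1)$\Rightarrow$(2), the inversion argument for (4)$\Rightarrow$(3), the direct proof of (11)$\Rightarrow$(1) via centrality of idempotents) are all correct, and broadly parallel to what the paper does. But there is one genuine gap, in your proof of (12)$\Rightarrow$(11). You take $a \in \ker{\alpha_n}$, set $x=(aa^{-1})(a^{-1}a)$, $y=a$, and claim $x \le a$, hence $xa=x$. That claim is false in general. Concretely, let $S=G^0$ be a nontrivial group with a zero adjoined and take $n=1$: here $\alpha_1=\sigma=\omega$ (since $0a=0b$ for all $a,b$), so $\ker{\alpha_1}=S$, and for $a=g\neq 1$ in $G$ one gets $x=(gg^{-1})(g^{-1}g)=1$, with $xa=g\neq 1=x$ and $1 \not\le g$ in the natural partial order. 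So your chosen witnesses never satisfy the hypothesis of implication (12) (outside of idempotent $a$), and the implication out of (12) back into the rest of the cycle is not established; the equivalence is left incomplete at exactly this point.

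The missing idea is the one the paper uses here: for $n \ge 1$ we have $\alpha_n=(\beta_{n-1})_t$, and kernels of congruences of the form $\rho_t$ admit an explicit description (Lemma 1.2 together with Petrich, Exercise III.2.14(iii) and the dual of Notation III.2.4): $a \in \ker{(\beta_{n-1})_t}$ if and only if $ea=e$ for some $e \in E_{\scriptscriptstyle{S}}$ with $e\,\beta_{n-1}\,aa^{-1}$. Since $\tr{\beta_{n-1}}=\tr{\alpha_n}$, this witness satisfies $e\,\alpha_n\,aa^{-1}$, so $x=e$, $y=a$ does meet the hypothesis of (12), yielding $a \in E_{\scriptscriptstyle{S}}\zeta$ and hence (11). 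Note that such an $e$ cannot be manufactured as a polynomial in $a$, $a^{-1}$ alone (in the $G^0$ example the only valid witness is $e=0$, unrelated to $g$); one genuinely needs the $\rho_t$-structure of $\alpha_n$ to produce it. With this single step replaced by the kernel-of-$\rho_t$ argument, your proof goes through.
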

\begin{proof}
 $(1) \Rightarrow (2)$. Let $a\,\alpha_n\,b$ and $a^{-1}a \leq b^{-1}b$. Then $ba^{-1} \in \ker{\alpha_n}$ whence it also follows that $a^{-1}
 \leq b^{-1}ba^{-1}=b^{-1}(bb^{-1})(ba^{-1})=b^{-1}(ba^{-1})(bb^{-1})$. Thus $aa^{-1} \leq (ab^{-1})(ba^{-1})(bb^{-1}) \leq bb^{-1}$.

 $(2) \Rightarrow (3)$. From the hypothesis, we have
 \begin{eqnarray*}
  a\,(\alpha_n \cap \mathcal{L})\,b &\iff& a^{-1}a=b^{-1}b \text{~and~} a\,\alpha_n\,b\\
  &\iff& aa^{-1}=bb^{-1} \text{~and~} a\,\alpha_n\,b\\
  &\iff& a\,(\alpha_n \cap \mathcal{R})\,b.
 \end{eqnarray*}

 $(3) \Rightarrow (4)$. Obvious, since $\mathcal{L}$ is a right and $\mathcal{R}$ is a left congruence.

 $(4) \Rightarrow (1)$. Here we have
 \begin{eqnarray*}
  a \in \ker{\alpha_n} &\Longrightarrow& a^{-1}a\,(\alpha_n \cap \mathcal{L})\,a\\
  &\Longrightarrow& aa^{-1}a^{-1}a\,(\alpha_n \cap \mathcal{L})\,aa^{-1}a=a \qquad \text{since~} \alpha_n \cap \mathcal{L} \text{~is a
  congruence}\\
  &\Longrightarrow& aa^{-1}a^{-1}a=a^{-1}a,
 \end{eqnarray*}
 and
 \begin{eqnarray*}
  a \in \ker{\alpha_n} &\Longrightarrow& a^{-1} \in \ker{\alpha_n}\\
  &\Longrightarrow& aa^{-1}\,(\alpha_n \cap \mathcal{L})\,a^{-1}\\
  &\Longrightarrow& a^{-1}aaa^{-1}\,(\alpha_n \cap \mathcal{L})\,a^{-1}aa^{-1}=a^{-1} \qquad \text{since~} \alpha_n \cap \mathcal{L} \text{~is
  a congruence}\\
  &\Longrightarrow& a^{-1}aaa^{-1}=aa^{-1}.
 \end{eqnarray*}
 Therefore we have $aa^{-1}=a^{-1}a$. It follows that $\ker{\alpha_n}$ is a Clifford semigroup.

 $(3) \Rightarrow (5) \Rightarrow (1)$. The proof is dual to that for $(3) \Rightarrow (4) \Rightarrow (1)$ and is omitted.

 $(4) \Rightarrow (6)$. On the one hand, $\alpha_n \cap \mathcal{L} \in \mathcal{C}(S)$ and $\alpha_n \cap \mathcal{L} \subseteq \mathcal{L}$
 give that $\alpha_n \cap \mathcal{L}$ is idempotent separating. Hence $\alpha_n \cap \mathcal{L} \subseteq \mu$ so that $\alpha_n \cap
 \mathcal{L} \subseteq \alpha_n \cap \mu$. On the other hand, $\mu \subseteq \mathcal{L}$ gives $\alpha_n \cap \mu \subseteq \alpha_n \cap
 \mathcal{L}$. Consequently, $\alpha_n \cap \mathcal{L}=\alpha_n \cap \mu$, as required.

 $(6) \Rightarrow (7)$. From $\alpha_n \cap \mathcal{L}=\alpha_n \cap \mu$ it follows that $\alpha_n \cap \mathcal{L}$ is a congruence and thus
 also that the $\beta_{n-1}$-is-over-$E$-unitary congruence $\beta_{n+1}=(\alpha_n)_k=(\alpha_n \cap \mathcal{L})^*=\alpha_n \cap \mathcal{L}
 \subseteq \mathcal{L}$ is idempotent separating.

 $(7) \Rightarrow (8)$. Assume that $\rho$ is an idempotent separating congruence such that $\beta_{n-1}(S/\rho)$ is over $E$-unitary
 semigroups. By Remark \ref{beta}, $\beta_{n+1}$ is the least such congruence. Therefore $\beta_{n+1} \subseteq \rho \subseteq \mu$.

 $(8) \Rightarrow (9)$. Since $\beta_{n+1} \subseteq \mu$, we have $(\beta_{n+1})_t \subseteq \mu_t=\varepsilon$ and thus
 $(\beta_{n+1})_t=\varepsilon$.

 $(9) \Rightarrow (10)$. It follows directly from Lemma \ref{min}.

 $(10) \Rightarrow (7)$. If $\beta_{n+1} \cap \mathcal{F}=\varepsilon$, then by Lemma \ref{min} $\beta_{n+1}$ is idempotent separating. By
 Remark \ref{beta}, $\beta_{n+1}$ is a $\beta_{n-1}$-is-over-$E$-unitary congruence.

 $(7) \Rightarrow (4)$. Since $(7) \Rightarrow (8)$, we know that $\beta_{n+1} \subseteq \rho \subseteq \mu$. By \cite[Proposition
 III.3.2]{inverse}, $\mu \subseteq \mathcal{L}$. Hence Definition \ref{not} and Lemma \ref{min} give that $(\alpha_n \cap
 \mathcal{L})^*=(\alpha_n)_k=\beta_{n+1} \subseteq \mathcal{L}$. Thus, with $\alpha_n \cap \mathcal{L} \subseteq \alpha_n$, we have $(\alpha_n
 \cap \mathcal{L})^* \subseteq \alpha_n^* = \alpha_n$ and $(\alpha_n \cap \mathcal{L})^*=\alpha_n \cap \mathcal{L}$ so that $\alpha_n \cap
 \mathcal{L}$ is a congruence.

 $(1) \Rightarrow (11)$. Let $a \in \ker{\alpha_n}$. By the fact that $\ker{\alpha_n}$ is a full inverse subsemigroup and the assumption that
 $\ker{\alpha_n}$ is a Clifford semigroup, we find that $ae=ea$ for all $e \in E_{\scriptscriptstyle{S}}$, and thus $a \in
 E_{\scriptscriptstyle{S}}\zeta$.

 $(11) \Rightarrow (12)$. Let $xy=x$ and $x^{-1}x\,\alpha_n\,yy^{-1}$. Then $y\,\alpha_n\,x^{-1}xy=x^{-1}x$. But $\ker{\alpha_n} \subseteq
 E_{\scriptscriptstyle{S}}\zeta$ and so $y \in E_{\scriptscriptstyle{S}}\zeta$.

 $(12) \Rightarrow (1)$. Let $a \in \ker{\alpha_n}$. By the dual of \cite[Notation III.2.4]{inverse} and \cite[Exercise
 III.2.14(iii)]{inverse}, $ea=e$ for some $e \in E_{\scriptscriptstyle{S}}$ with $e\,\beta_{n-1}\,aa^{-1}$. Notice that
 $\tr{\beta_{n-1}}=\tr{\alpha_n}$. We have $e^{-1}e=e\,\alpha_n\,aa^{-1}$ and therefore $a \in E_{\scriptscriptstyle{S}}\zeta$ by assumption.
 This together with the fact that $\ker{\alpha_n}$ is a full inverse subsemigroup gives that $S$ is a $\ker{\alpha_n}$-is-Clifford semigroup.
\end{proof}

An important property of $\ker{\alpha_n}$-is-Clifford semigroups is contained in the following proposition.

\begin{proposition}\label{class}
 Let $S$ be an inverse semigroup and $n \geqslant 2$. If $\ker{\alpha_{n-1}} \cap N$ is a Clifford subsemigroup for every $\eta$-class $N$ of
 $S$, then $\ker{\alpha_n}$ is a Clifford semigroup.
\end{proposition}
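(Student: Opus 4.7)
The plan is to verify condition (2) of Proposition \ref{kercliff}, namely the implication
\[
[\,a\,\alpha_n\,b \text{ and } a^{-1}a \leq b^{-1}b\,] \Rightarrow aa^{-1} \leq bb^{-1},
\]
since this is equivalent to $\ker{\alpha_n}$ being a Clifford semigroup. So I would start by fixing $a$, $b \in S$ with $a\,\alpha_n\,b$ and $p := a^{-1}a \leq q := b^{-1}b$, and aim to show $aa^{-1} \leq bb^{-1}$.

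The first step is to locate $a$ and $b$ in a common $\eta$-class. For $n \geq 2$, the min network gives $\alpha_n \subseteq \alpha_{n-1} \cap \beta_{n-1} \subseteq \beta_{n-1} \subseteq \beta_1 = \eta$, so $a\,\eta\,b$ and both lie in a single $\eta$-class $N$. Next I would introduce $c = ab^{-1}$ and verify that $c \in \ker{\alpha_{n-1}} \cap N$. From $a\,\alpha_n\,b$ one gets $ab^{-1}\,\alpha_n\,bb^{-1}$, so $c \in \ker{\alpha_n} \subseteq \ker{\alpha_{n-1}}$; and since $S/\eta$ is a semilattice with $a\eta = b\eta$, we also have $c\,\eta\,bb^{-1}\,\eta\,b$, placing $c$ in $N$.

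Now the hypothesis applies: $\ker{\alpha_{n-1}} \cap N$ is a Clifford semigroup, so $c$ lies in a maximal subgroup of this semigroup and therefore $cc^{-1} = c^{-1}c$ in $S$. The remaining step is the routine identification
\[
cc^{-1} = ab^{-1}ba^{-1} = aqa^{-1}, \qquad c^{-1}c = ba^{-1}ab^{-1} = bpb^{-1},
\]
together with two uses of $pq = p$ (which follows from $p \leq q$): first $aq = a(pq) = ap = a$, giving $cc^{-1} = aa^{-1}$; and second $bpb^{-1} \cdot bb^{-1} = b(pq)b^{-1} = bpb^{-1}$, giving $c^{-1}c \leq bb^{-1}$. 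Combining these with $cc^{-1} = c^{-1}c$ yields $aa^{-1} \leq bb^{-1}$, completing the proof.

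The argument is largely mechanical once the right invariant is spotted; the only step that requires a moment of care is checking that $c = ab^{-1}$ lies in the intersection $\ker{\alpha_{n-1}} \cap N$ to which the hypothesis can be applied, and this is where the containment $\alpha_n \subseteq \eta$ (valid exactly for $n \geq 2$, matching the statement) is used in an essential way. I do not anticipate any substantial obstacle beyond this bookkeeping.
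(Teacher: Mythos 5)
Your proof is correct, but it takes a genuinely different route from the paper's. The paper argues directly from centrality of idempotents: for $a \in \ker{\alpha_n}$ and $f \in E_{\scriptscriptstyle{S}}$ it forms the pairs $(af,\,a^{-1}af)$ and $(fa,\,faa^{-1})$, notes that each pair lies in $\ker{\alpha_{n-1}}$ and in a common $\eta$-class (using $a\,\eta\,a^{-1}a\,\eta\,aa^{-1}$, which holds for \emph{every} element, rather than your containment $\alpha_n \subseteq \eta$), and then uses the hypothesis --- specifically that the idempotents $a^{-1}af$ and $faa^{-1}$ are central in the Clifford subsemigroup $\ker{\alpha_{n-1}} \cap N$ --- to deduce $af = fa$; this establishes $\ker{\alpha_n} \subseteq E_{\scriptscriptstyle{S}}\zeta$, i.e.\ condition (11) of Proposition \ref{kercliff}, and hence the conclusion directly. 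You instead verify condition (2) of Proposition \ref{kercliff} with the single witness $c = ab^{-1}$, using only the weaker consequence $cc^{-1} = c^{-1}c$ of the Clifford hypothesis rather than full centrality of idempotents. Your argument is shorter on the page, but it outsources work to the implication $(2) \Rightarrow (1)$ of Proposition \ref{kercliff}, which the paper proves via $(3)$ and $(4)$; the paper's proof is self-contained and yields the formally stronger statement $\ker{\alpha_n} \subseteq E_{\scriptscriptstyle{S}}\zeta$ along the way. There is no circularity in your approach, since Proposition \ref{kercliff} precedes Proposition \ref{class} and does not depend on it. One small point worth making explicit: the inverse of $c$ inside the subsemigroup $\ker{\alpha_{n-1}} \cap N$ coincides with $c^{-1}$ computed in $S$ (by uniqueness of inverses in an inverse semigroup), which is what licenses reading $cc^{-1} = c^{-1}c$ as an identity in $S$; with that observed, your computations $cc^{-1} = aa^{-1}$ (from $aq = a$) and $c^{-1}c \leq bb^{-1}$ (from $pq = p$) are exactly right.
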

\begin{proof}
 Let $a \in \ker{\alpha_n}$ and $f \in E_{\scriptscriptstyle{S}}$. Since $a\,\eta\,a^{-1}a$, we have $af\,\eta\,a^{-1}af$. Further,
 $a\,\alpha_n\,a^{-1}a$ gives $af\,\alpha_n\,a^{-1}af$, whence $af$, $a^{-1}af \in \ker{\alpha_n} \subseteq \ker{\alpha_{n-1}}$. We
 consequently have $(af)(a^{-1}af)=(a^{-1}af)(af)$ since $\ker{\alpha_{n-1}} \cap (a^{-1}af)\,\eta$ is a Clifford subsemigroup of $S$. Notice
 that $(af)(a^{-1}af)=a(a^{-1}af)=af$ and $(a^{-1}af)(af)=(fa^{-1}a)(af)$. It follows that $af=fa^{-1}aaf$ and $faf=f(fa^{-1}aaf)=af$. But
 $a\,\eta\,aa^{-1}$ and so $fa\,\eta\,faa^{-1}$. Again, $a\,\alpha_n\,aa^{-1}$ gives $fa\,\alpha_n\,faa^{-1}$, whence $fa$, $faa^{-1} \in
 \ker{\alpha_n} \subseteq \ker{\alpha_{n-1}}$. Therefore we have $(fa)(faa^{-1})=(faa^{-1})(fa)$ by assumption. It is clear from
 $(fa)(faa^{-1})=faaa^{-1}f$ and $(faa^{-1})(fa)=f(faa^{-1})a=fa$ that $fa=faaa^{-1}f$ and $faf=(faaa^{-1}f)f=fa$. We conclude that $fa=faf=af$
 and that $\ker{\alpha_n}$ is a Clifford semigroup.
\end{proof}

\begin{remark}
 Proposition \ref{class} presents a response to the problem in \cite{feng}.
\end{remark}

For a given congruence $\rho$, an exactly parallel argument to Lemma \ref{referee}'s establishes that the least $\beta_n$-is-over-$E$-unitary
congruence containing $\rho$ exists. Denote it by $(\beta_{n+2})_\rho$. The next result characterizes $\ker{\alpha_n}$-is-Clifford congruences
in terms of more familiar notions.

\begin{proposition}\label{kercliffcon}
 For $n \geqslant 1$, the following statements concerning a congruence $\rho$ on an inverse semigroup $S$ are equivalent.\\
 (1) $\rho$ is a $\ker{\alpha_n}$-is-Clifford congruence;\\
 (2) $(\beta_{n+1})_{\rho} \subseteq \rho^T$, where $(\beta_{n+1})_{\rho}$ is the least $\beta_{n-1}$-is-over-$E$-unitary congruence on $S$
 containing $\rho$;\\
 (3) $\tr{(\beta_{n+1})_{\rho}}=\tr{\rho}$.
\end{proposition}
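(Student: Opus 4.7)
The plan is to obtain (1)$\Leftrightarrow$(2) by lifting the equivalence (1)$\Leftrightarrow$(8) of Proposition \ref{kercliff}, applied to the quotient inverse semigroup $S/\rho$, back to $S$ via the standard correspondence between congruences on $S/\rho$ and congruences on $S$ containing $\rho$.

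First I would identify, via that correspondence, two congruences on $S/\rho$. On the one hand, $\mu(S/\rho)$ corresponds to $\rho^T$ on $S$: $\rho^T$ is the greatest element of the $\mathcal{T}$-class of $\rho$, and congruences $\theta\supseteq\rho$ with $\tr\theta=\tr\rho$ are exactly those whose image $\theta/\rho$ in $S/\rho$ is idempotent separating; hence $\rho^T/\rho=\mu(S/\rho)$. On the other hand, by Remark \ref{beta}, $\beta_{n+1}(S/\rho)$ is the least $\beta_{n-1}$-is-over-$E$-unitary congruence on $S/\rho$; since the property ``$S/\theta$ is $\beta_{n-1}$-is-over-$E$-unitary'' depends only on the quotient, the correspondence sends it to the least congruence on $S$ containing $\rho$ with that property, namely $(\beta_{n+1})_\rho$. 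Thus $(\beta_{n+1})_\rho/\rho=\beta_{n+1}(S/\rho)$. Combining these identifications, $\ker\alpha_n(S/\rho)$ is Clifford iff $\beta_{n+1}(S/\rho)\subseteq\mu(S/\rho)$ by Proposition \ref{kercliff}(8) applied to $S/\rho$, iff $(\beta_{n+1})_\rho\subseteq\rho^T$, which is exactly (1)$\Leftrightarrow$(2).

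For (2)$\Leftrightarrow$(3) I would use that $(\beta_{n+1})_\rho\supseteq\rho$ by construction, so automatically $\tr{(\beta_{n+1})_\rho}\supseteq\tr\rho$. If $(\beta_{n+1})_\rho\subseteq\rho^T$ then $\tr{(\beta_{n+1})_\rho}\subseteq\tr{\rho^T}=\tr\rho$, forcing equality; conversely, if $\tr{(\beta_{n+1})_\rho}=\tr\rho$ then $(\beta_{n+1})_\rho$ sits in the $\mathcal{T}$-class of $\rho$, which is the interval $[\rho_t,\rho^T]$, so $(\beta_{n+1})_\rho\subseteq\rho^T$.

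The main obstacle will be justifying cleanly that the minimum-defining operators intertwine with the lattice isomorphism $[\rho,\omega]\to\mathcal{C}(S/\rho)$, since both $\mu$ and $\beta_{n+1}$ are defined as extrema of classes of congruences cut out by quotient-dependent properties. For $\mu$ this is standard kernel--trace theory; for $\beta_{n+1}$ it rests on the intrinsic, quotient-only nature of being $\beta_{n-1}$-is-over-$E$-unitary, which is precisely the content of Remark \ref{beta} combined with Lemma \ref{equivalent}, together with the existence of $(\beta_{n+1})_\rho$ noted just before the proposition. Once this intertwining is in place the rest of the argument is purely lattice-theoretic.
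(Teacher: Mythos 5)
Your proposal is correct and follows essentially the same route as the paper: both identify $(\beta_{n+1})_\rho/\rho=\beta_{n+1}(S/\rho)$ and $\rho^T/\rho=\mu(S/\rho)$ via the correspondence between congruences on $S/\rho$ and congruences on $S$ containing $\rho$, then invoke Proposition \ref{kercliff} on $S/\rho$ (the paper uses conditions (7)/(8), you use (8)), and handle (2)$\Leftrightarrow$(3) by the same trace computation using $\tr{\rho^T}=\tr{\rho}$ and the interval structure of the $\mathcal{T}$-class.
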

\begin{proof}
 $(1) \Rightarrow (2)$. Clearly $(S/\rho)/((\beta_{n+1})_\rho/\rho) \simeq S/(\beta_{n+1})_\rho$. Thus $(\beta_{n+1})_\rho/\rho$ is a
 $\beta_{n-1}$-is-over-$E$-unitary congruence on $S/\rho$. If $\theta/\rho$ is a  $\beta_{n-1}$-is-over-$E$-unitary congruence on $S/\rho$ with
 $\rho \subseteq \theta$, then $S/\theta \simeq (S/\rho)/(\theta/\rho)$ which implies that $\theta$ is a  $\beta_{n-1}$-is-over-$E$-unitary
 congruence on $S$. Hence $(\beta_{n+1})_\rho \subseteq \theta$ and $(\beta_{n+1})_\rho/\rho \subseteq \theta/\rho$. Consequently
 $(\beta_{n+1})_\rho/\rho$ is the least $\beta_{n-1}$-is-over-$E$-unitary congruence on $S/\rho$ whence
 $\beta_{n+1}(S/\rho)=(\beta_{n+1})_\rho/\rho$.

 If $S/\rho$ is a $\ker{\alpha_n}$-Clifford semigroup, then $(\beta_{n+1})_{\rho}/\rho=\beta_{n+1}(S/\rho) \subseteq \mu(S/\rho)=\rho^T/\rho$,
 and thus $(\beta_{n+1})_{\rho} \subseteq \rho^T$.

 $(2) \Rightarrow (3)$. Since $\rho \subseteq (\beta_{n+1})_{\rho} \subseteq \rho^T$, we have $\tr{\rho} \subseteq \tr{(\beta_{n+1})_{\rho}}
 \subseteq \tr{\rho^T}=\tr{\rho}$, which implies $\tr{(\beta_{n+1})_{\rho}}=\tr{\rho}$.

 $(3) \Rightarrow (1)$. $\tr{(\beta_{n+1})_{\rho}}=\tr{\rho}$ implies $(\beta_{n+1})_{\rho} \subseteq \rho^T$. Since
 $\tr{\rho}=\tr{(\beta_{n+1})_{\rho}}$, $(\beta_{n+1})_{\rho}/\rho$ is an idempotent separating congruence on $S/\rho$, which gives that
 $S/\rho$ is a $\ker{\alpha_n(S/\rho)}$-is-Clifford semigroup by Proposition \ref{kercliff}. This completes the proof that $\rho$ is a
 $\ker{\alpha_n}$-is-Clifford congruence.
\end{proof}

Remind that $\mathcal{A}_n$ is the set of all congruences $\gamma$ on an inverse semigroup $S$ such that the kernel of $\alpha_n(S/\gamma)$ is
a Clifford semigroup. Equivalently, $\mathcal{A}_n$ is the set of all $\ker{\alpha_n}$-is-Clifford congruences on $S$ ordered by inclusion.

\begin{theorem} \label{kccex}
 Let $S$ be an inverse semigroup.\\
 (1) $\mathcal{A}_n$ is a complete $\cap$-subsemilattice of $\mathcal{C}(S)$ whose least element is $\alpha_{n+2}=(\beta_{n+1})_t=(\beta_{n+1}
 \cap \mathcal{F})^*$ and greatest element is $\omega$;\\
 (2) the interval $[\alpha_{n+2},\,\beta_{n+1}]$ is a complete sublattice of $\mathcal{A}_n$.
\end{theorem}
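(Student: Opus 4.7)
The plan is to combine Lemma \ref{referee} (closure under arbitrary intersections), Proposition \ref{kercliffcon} (the trace criterion $\tr(\beta_{n+1})_\rho=\tr\rho$ for membership in $\mathcal{A}_n$), and Remark \ref{beta} (which identifies $\beta_{n+1}$ as the least $\beta_{n-1}$-is-over-$E$-unitary congruence on $S$). Essentially every step reduces to trace chasing, and the elementary identity $\tr(\gamma\cap\theta)=\tr\gamma\cap\tr\theta$, obtained by restricting both sides to $E_{\scriptscriptstyle{S}}$, does most of the work.

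For part (1), the proof of Lemma \ref{referee} already shows $\mathcal{A}_n$ is closed under arbitrary intersections, so is a complete $\cap$-subsemilattice of $\mathcal{C}(S)$; the greatest element is trivially $\omega$; and the equalities $\alpha_{n+2}=(\beta_{n+1})_t=(\beta_{n+1}\cap\mathcal{F})^*$ are immediate from Definition \ref{not} and Lemma \ref{min}. The substantive step is identifying $\alpha_{n+2}$ as the least element of $\mathcal{A}_n$. To check $\alpha_{n+2}\in\mathcal{A}_n$, I would start from $\alpha_{n+2}=(\beta_{n+1})_t\subseteq\beta_{n+1}$ and use that $\beta_{n+1}$ itself is a $\beta_{n-1}$-is-over-$E$-unitary congruence containing $\alpha_{n+2}$, obtaining $\alpha_{n+2}\subseteq(\beta_{n+1})_{\alpha_{n+2}}\subseteq\beta_{n+1}$; applying $\tr$ and using $\tr\alpha_{n+2}=\tr\beta_{n+1}$ sandwiches $\tr(\beta_{n+1})_{\alpha_{n+2}}=\tr\alpha_{n+2}$, and Proposition \ref{kercliffcon}(3) concludes. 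Conversely, for $\gamma\in\mathcal{A}_n$, Remark \ref{beta} gives $\beta_{n+1}\subseteq(\beta_{n+1})_\gamma$, whence $\tr\beta_{n+1}\subseteq\tr(\beta_{n+1})_\gamma=\tr\gamma$. Setting $\delta=\gamma\cap\beta_{n+1}$, the trace identity gives $\tr\delta=\tr\gamma\cap\tr\beta_{n+1}=\tr\beta_{n+1}$, so $\delta$ lies in the $\mathcal{T}$-class of $\beta_{n+1}$ and therefore contains its minimum $(\beta_{n+1})_t=\alpha_{n+2}$; since $\delta\subseteq\gamma$, this yields $\alpha_{n+2}\subseteq\gamma$.

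For part (2), observe first that $[\alpha_{n+2},\beta_{n+1}]$ is automatically closed under arbitrary meets and joins computed in $\mathcal{C}(S)$, since every member contains $\alpha_{n+2}$ and is contained in $\beta_{n+1}$. It therefore suffices to show every $\rho$ with $\alpha_{n+2}\subseteq\rho\subseteq\beta_{n+1}$ lies in $\mathcal{A}_n$. For such $\rho$, the minimality of $(\beta_{n+1})_\rho$ among $\beta_{n-1}$-is-over-$E$-unitary congruences containing $\rho$, together with $\rho\subseteq\beta_{n+1}$, gives $(\beta_{n+1})_\rho\subseteq\beta_{n+1}$. The trace chain $\tr\rho\subseteq\tr(\beta_{n+1})_\rho\subseteq\tr\beta_{n+1}=\tr\alpha_{n+2}\subseteq\tr\rho$ then forces equality throughout, and Proposition \ref{kercliffcon} concludes $\rho\in\mathcal{A}_n$.

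The argument has no real obstacle beyond bookkeeping. The conceptual point worth flagging is that $\mathcal{A}_n$ is only a $\cap$-subsemilattice, not a full sublattice, because joins in $\mathcal{C}(S)$ need not preserve the $\ker{\alpha_n}$-is-Clifford property in general; the bounded interval $[\alpha_{n+2},\beta_{n+1}]$ is nevertheless join-closed, precisely because all such joins are trapped below $\beta_{n+1}$ and so remain in the interval.
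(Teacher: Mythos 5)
Your proposal is correct and follows essentially the same route as the paper: Lemma \ref{referee} for completeness under intersections, Remark \ref{beta} to pin down $(\beta_{n+1})_\rho$ against $\beta_{n+1}$, Proposition \ref{kercliffcon} for membership in $\mathcal{A}_n$, and the observation that the interval $[\alpha_{n+2},\,\beta_{n+1}]$ traps all meets and joins for part (2). The only cosmetic differences are that you invoke criterion (3) of Proposition \ref{kercliffcon} (trace equality) plus the intersection trick $\delta=\gamma\cap\beta_{n+1}$, where the paper uses criterion (2) (namely $(\beta_{n+1})_\rho \subseteq \rho^T$) and then applies the monotone lower-$t$ operator to the chain $\beta_{n+1}\subseteq(\beta_{n+1})_\rho\subseteq\rho^T$.
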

\begin{proof}
 (1) It follows directly from Lemma \ref{referee} that $\mathcal{A}_n$ is a complete $\cap$-subsemilattice of $\mathcal{C}(S)$.

 Since $(\beta_{n+1})_{(\beta_{n+1})_t}=\beta_{n+1} \subseteq (\beta_{n+1})^T=((\beta_{n+1})_t)^T$, we have by Proposition \ref{kercliffcon}
 that $(\beta_{n+1})_t$ is a $\ker{\alpha_n}$-is-Clifford congruence. If $\rho$ is a $\ker{\alpha_n}$-is-Clifford congruence, then $\beta_{n+1}
 \subseteq (\beta_{n+1})_\rho \subseteq \rho^T$ and $(\beta_{n+1})_t \subseteq (\rho^T)_t=\rho_t \subseteq \rho$. This proves that
 $\alpha_{n+2}=(\beta_{n+1})_t$ is the least $\ker{\alpha_n}$-is-Clifford congruence.

 (2) If $\rho \in [\alpha_{n+2},\,\beta_{n+1}]$, then $\tr{\rho}=\tr{\alpha_{n+2}}=\tr{\beta_{n+1}}$. But then $(\beta_{n+1})_\rho=\beta_{n+1}
 \subseteq \rho^T$, and thus Proposition \ref{kercliffcon} gives that $\rho$ is a $\ker{\alpha_n}$-is-Clifford congruence.

 Let $\mathcal{A}$ be a non-empty family of congruences on $S$ such that $\rho \in [\alpha_{n+2},\,\beta_{n+1}]$ for every $\rho \in
 \mathcal{A}$, then $\underset{\scriptscriptstyle{\rho \in \mathcal{A}}}{\bigcap}\rho$, $\underset{\scriptscriptstyle{\rho \in
 \mathcal{A}}}{\bigvee}\rho \in [\alpha_{n+2},\,\beta_{n+1}]$, and so $\underset{\scriptscriptstyle{\rho \in \mathcal{A}}}{\bigcap}\rho$,
 $\underset{\scriptscriptstyle{\rho \in \mathcal{A}}}{\bigvee}\rho$ are $\ker{\alpha_n}$-is-Clifford congruences by what was proved earlier,
 which completes the proof of the assertion.
\end{proof}

We now turn to characterizations of $\beta_{n+2}$. Compare the following result with Proposition \ref{kercliff}.

\begin{proposition}\label{boeu}
 For $n \geqslant 1$, the following conditions on an inverse semigroup $S$ are equivalent.\\
 (1) $S$ is a $\beta_n$-is-over-$E$-unitary semigroup;\\
 (2) $\beta_{n} \cap \mathcal{F}$ is a congruence;\\
 (3) $\beta_{n} \cap \mathcal{C}$ is a congruence;\\
 (4) $\beta_{n} \cap \mathcal{F}=\beta_{n} \cap \tau$;\\
 (5) $\beta_{n} \cap \mathcal{C}=\beta_{n} \cap \tau$;\\
 (6) there exists an idempotent pure $\ker{\alpha_{n-1}}$-is-Clifford congruence on $S$;\\
 (7) $\alpha_{n+1} \subseteq \tau$;\\
 (8) $(\alpha_{n+1})_k=\varepsilon$;\\
 (9) $\tr{\beta_n} \subseteq \tr{\tau}$;\\
 (10) $S$ satisfies the implication $xy=x$, $x^{-1}x\,\alpha_{n+1}\,yy^{-1} \Rightarrow y \in E_{\scriptscriptstyle{S}}$;\\
 (11) $\alpha_{n+1} \cap \mathcal{L}=\varepsilon$.
\end{proposition}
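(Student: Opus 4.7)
My plan is to prove the proposition by making condition (7), namely $\alpha_{n+1}\subseteq\tau$, the central hub of a web of equivalences, routing every other condition through it---much as the dual Proposition~\ref{kercliff} routes through ``$\ker\alpha_n$ is Clifford.'' The tools I would rely on are $\alpha_{n+1}=(\beta_n)_t=(\beta_n\cap\mathcal{F})^*=(\beta_n\cap\mathcal{C})^*$ and $\beta_{n+2}=(\alpha_{n+1})_k=(\alpha_{n+1}\cap\mathcal{L})^*$ from Definition~\ref{not} and Lemma~\ref{min}; the identity $\tr\alpha_{n+1}=\tr\beta_n$; the characterization of $\tau$ as the greatest idempotent pure congruence (so $\tau\subseteq\mathcal{C}\subseteq\mathcal{F}$, $\ker\tau=E_{\scriptscriptstyle{S}}$ and $\tau_k=\varepsilon$); and the fact, from Lemmas~\ref{fres} and~\ref{equivalent}, that $S$ is $\beta_n$-is-over-$E$-unitary exactly when $\beta_{n+2}=\varepsilon$.

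The first batch of links is essentially bookkeeping. $(1)\Leftrightarrow(11)$ follows from $\beta_{n+2}=(\alpha_{n+1}\cap\mathcal{L})^*$; $(11)\Leftrightarrow(8)$ is immediate from Lemma~\ref{min}; $(8)\Leftrightarrow(7)$ because $\alpha_{n+1}\subseteq\tau$ iff $\ker\alpha_{n+1}=E_{\scriptscriptstyle{S}}$ iff $(\alpha_{n+1})_k=\varepsilon$; and $(7)\Leftrightarrow(6)$ uses Theorem~\ref{kccex}, which identifies $\alpha_{n+1}$ as the least $\ker{\alpha_{n-1}}$-is-Clifford congruence, so (7) makes $\alpha_{n+1}$ itself such an idempotent pure witness, and (6) forces $\alpha_{n+1}\subseteq\rho\subseteq\tau$. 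To close the cycle $(7)\Rightarrow(1)$ directly, monotonicity of $\rho\mapsto\rho_k$ (via Lemma~\ref{min}) yields $\beta_{n+2}=(\alpha_{n+1})_k\subseteq\tau_k=\varepsilon$.

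For $(7)\Leftrightarrow(9)$ the forward direction is immediate; the reverse uses that, under (9), $\alpha_{n+1}\cap\tau$ has trace $\tr\beta_n\cap\tr\tau=\tr\beta_n$, and since $\alpha_{n+1}=(\beta_n)_t$ is the least congruence with that trace, we get $\alpha_{n+1}\subseteq\alpha_{n+1}\cap\tau\subseteq\tau$. For $(2)$--$(5)$, assume (7): then $\alpha_{n+1}\subseteq\tau\subseteq\mathcal{C}\subseteq\mathcal{F}$ and $\alpha_{n+1}\subseteq\beta_n$ give $\alpha_{n+1}\subseteq\beta_n\cap\tau\subseteq\beta_n\cap\mathcal{C}\subseteq\beta_n\cap\mathcal{F}$, while $\beta_n\cap\mathcal{F}\subseteq(\beta_n\cap\mathcal{F})^*=\alpha_{n+1}$, collapsing all four intersections simultaneously to $\alpha_{n+1}$. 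Conversely, if $\beta_n\cap\mathcal{F}$ is a congruence, closure under inverses forces it into $\mathcal{C}$, so $\alpha_{n+1}=(\beta_n\cap\mathcal{F})^*\subseteq\mathcal{C}\subseteq\tau$, recovering (7); the arguments starting from (3), (4), or (5) are completely analogous.

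The main obstacle is $(10)\Rightarrow(7)$. Going forward, $(7)\Rightarrow(10)$ is painless: from $xy=x$ and $x^{-1}x\,\alpha_{n+1}\,yy^{-1}$ one obtains $y=yy^{-1}y\,\alpha_{n+1}\,x^{-1}xy=x^{-1}x\in E_{\scriptscriptstyle{S}}$, so $y\in\ker\alpha_{n+1}=E_{\scriptscriptstyle{S}}$. For the reverse, given $a\in\ker\alpha_{n+1}$ I would invoke the same kernel-representation fact that is used at the end of the proof of Proposition~\ref{kercliff} (the dual of Notation~III.2.4 together with Exercise~III.2.14(iii) of~\cite{inverse}) to produce $e\in E_{\scriptscriptstyle{S}}$ with $ea=e$ and $e\,\beta_n\,aa^{-1}$; the equality $\tr\alpha_{n+1}=\tr\beta_n$ then lifts the second relation to $e\,\alpha_{n+1}\,aa^{-1}$, so (10) applied with $x=e$, $y=a$ forces $a\in E_{\scriptscriptstyle{S}}$. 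Hence $\ker\alpha_{n+1}=E_{\scriptscriptstyle{S}}$ and $\alpha_{n+1}\subseteq\tau$, completing the web.
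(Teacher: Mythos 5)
Your proof is correct, and although it shares the paper's overall architecture---the paper also makes condition (7) the hub through which everything else is routed---several of your individual links are genuinely different. The paper handles (1) by element-wise semigroup arguments: its $(7)\Rightarrow(1)$ verifies that each $\beta_n$-class is $E$-unitary using the natural partial order, and its $(1)\Rightarrow(10)$ is another element computation; you instead identify (1) with the single lattice condition $\beta_{n+2}=\varepsilon$ via Lemmas~\ref{fres} and~\ref{equivalent} (the content of Remark~\ref{beta}), after which $(1)\Leftrightarrow(11)\Leftrightarrow(8)$ is immediate from Lemma~\ref{min}; this is shorter and also avoids the paper's appeal to Proposition III.4.2 of \cite{inverse} in its step $(11)\Rightarrow(7)$. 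Your $(9)\Rightarrow(7)$ is likewise new: the paper extracts an idempotent witness from $\ker{(\beta_n)_t}$ via Exercise III.2.14(iii) of \cite{inverse}, whereas you note that under (9) the congruence $\alpha_{n+1}\cap\tau$ has trace exactly $\tr{\beta_n}$ and invoke minimality of $(\beta_n)_t$ in its $\mathcal{T}$-class---a purely lattice-theoretic argument needing no kernel representation (you still need that representation, as does the paper, for $(10)\Rightarrow(7)$). Finally, you collapse (2)--(5) to $\alpha_{n+1}$ simultaneously under (7), where the paper runs the chain $(6)\Rightarrow(3)\Rightarrow(2)\Rightarrow(4)\Rightarrow(5)\Rightarrow(7)$; the substance is close, and your return trip from (2)---compatibility of a congruence with inversion upgrades $\mathcal{F}$-relatedness to $\mathcal{C}$-relatedness---is precisely the fact the paper uses tacitly when it asserts that a congruence contained in $\mathcal{F}$ is idempotent pure. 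What your route buys is brevity and a clear display of the kernel--trace duality with Proposition~\ref{kercliff}; what the paper's buys is explicitness about how the conditions act on elements. One slip should be repaired: your chain $\alpha_{n+1}=(\beta_n\cap\mathcal{F})^*\subseteq\mathcal{C}\subseteq\tau$ literally asserts $\mathcal{C}\subseteq\tau$, which is false (it is $\tau\subseteq\mathcal{C}$ that holds); what you mean, and what is true, is that $\alpha_{n+1}=\beta_n\cap\mathcal{F}$ is then a \emph{congruence} contained in $\mathcal{C}$, hence idempotent pure, hence contained in the greatest idempotent pure congruence $\tau$.
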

\begin{proof}
 $(7) \Rightarrow (8)$. It follows from $\alpha_{n+1} \subseteq \tau$ that $(\alpha_{n+1})_k \subseteq \tau_k=\varepsilon$, and hence that
 $(\alpha_{n+1})_k=\varepsilon$.

 $(8) \Rightarrow (7)$. By $(\alpha_{n+1})_k=\varepsilon$, we have $\ker{\alpha_{n+1}}=\ker{(\alpha_{n+1})_k}=E_{\scriptscriptstyle{S}}$. Hence
 $\alpha_{n+1}$ is idempotent pure so that $\alpha_{n+1} \subseteq \tau$.

 $(7) \Rightarrow (6)$. The hypothesis implies that the $\ker{\alpha_{n-1}}$-is-Clifford congruence $\alpha_{n+1}$ is idempotent pure.

 $(6) \Rightarrow (3)$. Assume that $\rho$ is an idempotent pure $\ker{\alpha_{n-1}}$-Clifford congruence. Then $\alpha_{n+1} \subseteq \rho
 \subseteq \tau \subseteq \mathcal{C}$ so that $(\beta_n \cap \mathcal{C})^*=\alpha_{n+1} \subseteq \mathcal{C}$. Also by $(\beta_n \cap
 \mathcal{C})^* \subseteq \beta_n$, $(\beta_n \cap \mathcal{C})^* \subseteq \beta_n \cap \mathcal{C}$ and thus $(\beta_n \cap
 \mathcal{C})^*=\beta_n \cap \mathcal{C}$, which implies that $\beta_n \cap \mathcal{C}$ is a congruence.

 $(3) \Rightarrow (9)$. If $\beta_{n} \cap\,\mathcal{C}$ is a congruence, then $\beta_{n} \cap\,\mathcal{C}$ is idempotent pure since
 $\beta_{n} \cap\,\mathcal{C} \subseteq \mathcal{C}$. Hence $\beta_{n} \cap\,\mathcal{C} \subseteq \tau$ and $\beta_{n} \cap\,\mathcal{C}
 \subseteq \beta_{n} \cap \tau$. Therefore $\beta_{n} \cap\,\mathcal{C}=\beta_{n} \cap \tau$ from the fact that $\tau \subseteq \mathcal{C}$.

 Let $e$, $f \in E_{\scriptscriptstyle{S}}$ with $e\,\beta_{n}\,f$. Then $e\,(\beta_{n} \cap\,\mathcal{C})\,f$ since any two idempotents is
 $\mathcal{C}$-related on inverse semigroups. By $\beta_{n} \cap\,\mathcal{C}=\beta_{n} \cap \tau$, we get $e\,\tau\,f$, as required.

 $(9) \Rightarrow (7)$. Suppose that $a \in \ker{\alpha_{n+1}}=\ker{(\beta_{n})_t}$. Then by \cite[Exercises
 \uppercase\expandafter{\romannumeral 3.2.14 (\romannumeral 3)}]{inverse} there exists $e \in E_{\scriptscriptstyle{S}}$ such that $ae=e$ and
 $e\,\beta_{n}\,a^{-1}a$, and thus $e\,\tau\,a^{-1}a$ by assumption. Hence $e=ae\,\tau\,a(a^{-1}a)=a$ which gives $a \in
 E_{\scriptscriptstyle{S}}$.

 $(3) \Rightarrow (2)$. If $\beta_{n} \cap \mathcal{C}$ is a congruence, then $\beta_{n} \cap \mathcal{F} \subseteq (\beta_{n} \cap
 \mathcal{F})^*=(\beta_{n} \cap \mathcal{C})^*=\beta_{n} \cap \mathcal{C} \subseteq \beta_{n} \cap \mathcal{F}$, and thus $\beta_{n} \cap
 \mathcal{F}=\beta_{n} \cap \mathcal{C}$ is a congruence.

 $(2) \Rightarrow (4)$. On the one hand, $\beta_{n} \cap \mathcal{F} \in \mathcal{C}(S)$ and $\beta_{n} \cap \mathcal{F} \subseteq \mathcal{F}$
 give that $\beta_{n} \cap \mathcal{F}$ is idempotent pure. Hence $\beta_{n} \cap \mathcal{F} \subseteq \tau$ so that $\beta_{n} \cap
 \mathcal{F} \subseteq \beta_{n} \cap \tau$. On the other hand, $\tau \subseteq \mathcal{F}$ gives $\beta_{n} \cap \tau \subseteq \beta_{n}
 \cap \mathcal{F}$. Consequently, $\beta_{n} \cap \mathcal{F}=\beta_{n} \cap \tau$, as required.

 $(4) \Rightarrow (5)$. Assume that $\beta_{n} \cap \mathcal{F}=\beta_{n} \cap \tau$. Then $\beta_{n} \cap \mathcal{C} \subseteq \beta_{n} \cap
 \mathcal{F}=\beta_{n} \cap \tau \subseteq \beta_{n} \cap \mathcal{C}$, which gives that $\beta_{n} \cap \mathcal{C}=\beta_{n} \cap \tau$.

 $(5) \Rightarrow (7)$. It follows directly from the hypothesis that $\alpha_{n+1}=(\beta_{n})_t=(\beta_{n} \cap \mathcal{C})^*=(\beta_{n} \cap
 \tau)^*=\beta_{n} \cap \tau \subseteq \tau$.

 $(3) \Rightarrow (11)$. Since $\beta_{n} \cap \mathcal{C}$ is a congruence, we have that $\alpha_{n+1}=(\beta_{n})_t=(\beta_{n} \cap
 \mathcal{C})^*=\beta_{n} \cap \mathcal{C}$, and thus $\alpha_{n+1} \cap \mathcal{L}=\beta_{n} \cap \mathcal{C} \cap \mathcal{L}=\beta_{n} \cap
 \varepsilon=\varepsilon$.

 $(11) \Rightarrow (7)$. Since $\alpha_{n+1} \cap \mathcal{L}=\varepsilon$, by \cite[Proposition \@Roman3.4.2]{inverse} we have that
 $\alpha_{n+1}$ is idempotent pure and thus $\alpha_{n+1} \subseteq \tau$.

 $(7) \Rightarrow (1)$. Let $a \in e\beta_{n}$ and $a \in E_{\scriptscriptstyle{e\beta_{n}}}\omega$. Then $a=fg$ for some $f$, $g \in
 E_{\scriptscriptstyle{e\beta_{n}}}$ so that $ag=fg$ and $g\,\beta_{n}\,a^{-1}a\,\beta_{n}\,f$. Thus $a\,(\beta_{n})_t\,f$ which implies that
 $a\,\alpha_{n+1}\,f$. But $\alpha_{n+1} \subseteq \tau$ which yields $a \in E$.

 $(1) \Rightarrow (10)$. Let $x,\,y \in S$ be such that $xy=x$ and $x^{-1}x\,\alpha_{n+1}\,yy^{-1}$. Then $x^{-1}x\,\beta_n\,yy^{-1}$ since
 $\tr{\alpha_{n+1}}=\tr{\beta_n}$. Hence $x^{-1}x=x^{-1}xy\,\beta_n\,yy^{-1}y=y$, which together with $x^{-1}xy=x^{-1}x$ implies $y \in
 E_{\scriptscriptstyle{S}}$ by assumption.

 $(10) \Rightarrow (7)$. Assume that $a\,(\beta_{n})_t\,e$ for some $e \in E_{\scriptscriptstyle{S}}$. Then there exists $f \in
 E_{\scriptscriptstyle{S}}$ such that $fa=fe$ and $f\,\beta_{n}\,aa^{-1}\,\beta_{n}\,e$, which implies that $(fe)a=e(fa)=e(fe)=fe$ and
 $fe=fa\,\beta_{n}\,(aa^{-1})a=a$. Therefore $fe\,\beta_n\,aa^{-1}$ whence $fe\,\alpha_{n+1}\,aa^{-1}$, since $\tr{\beta_n}=\tr{\alpha_{n+1}}$.
 The hypothesis yields $a \in E_{\scriptscriptstyle{S}}$ and thus $\ker{(\beta_{n})_t}=E_{\scriptscriptstyle{S}}$ so that
 $\alpha_{n+1}=(\beta_{n})_t \subseteq \tau$.
\end{proof}

The next proposition illustrates this class of inverse semigroups.

\begin{proposition}\label{ker}
 Let $S$ be a $\beta_n$-is-over-$E$-unitary inverse semigroup and $n \geqslant 1$. Then $S$ is a $\ker{\alpha_{n-1}}$-is-$E$-reflexive
 semigroup.
\end{proposition}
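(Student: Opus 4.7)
The plan is to reduce the $E$-reflexivity of $\ker\alpha_{n-1}$ to the hypothesis that each $\beta_n$-class of an idempotent is $E$-unitary, by identifying every $\eta$-class of $\ker\alpha_{n-1}$ as a subsemigroup of some such $\beta_n$-class. Since $\beta_n=(\alpha_{n-1})_k$ gives $\ker\alpha_{n-1}=\ker\beta_n$, and since the introduction records that an inverse semigroup is $E$-reflexive if and only if each of its $\eta$-classes is $E$-unitary, it will suffice to prove that every $\eta$-class of $\ker\beta_n$ is $E$-unitary.

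The key observation I would make is that the restriction of $\beta_n$ to $\ker\beta_n$ is a semilattice congruence on $\ker\beta_n$. For each $e\in E_{\scriptscriptstyle{S}}$, the class $e\beta_n$ is an inverse subsemigroup of $\ker\beta_n$, closure under product and inversion coming from $e^2=e=e^{-1}$ and the fact that $\beta_n$ is a congruence. Every element of $\ker\beta_n$ is $\beta_n$-related to some idempotent, so the $\beta_n$-classes contained in $\ker\beta_n$ are precisely the sets $e\beta_n$ for $e\in E_{\scriptscriptstyle{S}}$. The inclusion $(e\beta_n)(f\beta_n)\subseteq(ef)\beta_n$ then exhibits the quotient as a homomorphic image of the semilattice $E_{\scriptscriptstyle{S}}$, hence a semilattice. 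Minimality of $\eta(\ker\beta_n)$ among semilattice congruences therefore forces each $\eta(\ker\beta_n)$-class to be contained in some $e\beta_n$.

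The conclusion now follows because $E$-unitarity is inherited by inverse subsemigroups: if $T\subseteq T'$ are inverse semigroups with $T'$ $E$-unitary, and $ey=e$ with $e\in E_T$ and $y\in T$, then $y$ is idempotent in $T'$ and hence in $T$. By hypothesis each $e\beta_n$ is $E$-unitary, so every one of its inverse subsemigroups is $E$-unitary, and in particular every $\eta$-class of $\ker\beta_n$ is $E$-unitary. Therefore $\ker\alpha_{n-1}=\ker\beta_n$ is $E$-reflexive. The only step that requires real thought is the semilattice-congruence observation for $\beta_n$ restricted to $\ker\beta_n$; after that, the chain \emph{$\eta$-class of $\ker\beta_n$} $\subseteq e\beta_n\subseteq\ker\beta_n$, together with hereditary $E$-unitarity, closes the argument.
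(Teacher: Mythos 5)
Your proof is correct, and it takes a genuinely different route from the paper's. The paper argues element-wise from the definition of $E$-reflexivity: for $x,y\in\ker{\alpha_{n-1}}$ and $e\in E_S$ with $exy\in E_S$, it computes that both $exy$ and $eyx$ are $\mathcal{C}$-related and $\beta_n$-related to the auxiliary element $yexyy^{-1}$, then invokes Proposition \ref{boeu} (applicable exactly because $S$ is $\beta_n$-is-over-$E$-unitary) to conclude that $\beta_n\cap\mathcal{C}$ is an idempotent pure congruence; symmetry and transitivity give $exy\,(\beta_n\cap\mathcal{C})\,eyx$, and idempotent purity gives $eyx\in E_S$. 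You instead avoid Proposition \ref{boeu} altogether and argue structurally: $\ker{\alpha_{n-1}}=\ker{\beta_n}$ since $\beta_n=(\alpha_{n-1})_k$; the restriction of $\beta_n$ to this kernel is a semilattice congruence, because its classes are exactly the sets $e\beta_n$ with $e\in E_S$ (every element of the kernel is $\beta_n$-related to an idempotent, and every $\beta_n$-class of an idempotent lies wholly inside the kernel), so the quotient is a homomorphic image of the semilattice $E_S$; minimality of the least semilattice congruence then places every $\eta$-class of the kernel inside some $e\beta_n$; finally, $E$-unitarity passes to subsemigroups, so each such $\eta$-class is $E$-unitary, which is the characterization of $E$-reflexivity recorded in the paper's introduction. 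Each of these steps is sound. As for what each approach buys: yours is more elementary and conceptual, exhibiting directly the semilattice-of-$E$-unitary-semigroups decomposition of $\ker{\beta_n}$ and relying only on the definition of the kernel plus standard facts, whereas the paper's computation stays inside its own machinery of intersections with $\mathcal{C}$ and puts Proposition \ref{boeu}, one of the section's main characterizations, to work; the paper's version is also self-contained in the sense that it does not need the (standard, but externally cited) equivalence between $E$-reflexivity and all $\eta$-classes being $E$-unitary.
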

\begin{proof}
 Let $x, y \in \ker{\alpha_{n-1}}$, $e \in E_{\scriptscriptstyle{S}}$ be such that $exy \in E_{\scriptscriptstyle{S}}$. Then
 $$(exy)^{-1}(yexyy^{-1})=(exy)^{-1}(y(exy)y^{-1}) \in E_{\scriptscriptstyle{S}},$$ $$(exy)(yexyy^{-1})^{-1}=(exy)(y(exy)^{-1}y^{-1}) \in
 E_{\scriptscriptstyle{S}};$$ $$(eyx)^{-1}(yexyy^{-1})=(x^{-1}((y^{-1}ey)e)x)(yy^{-1}) \in E_{\scriptscriptstyle{S}},$$
 $$(eyx)(yexyy^{-1})^{-1}=e(y((x(yy^{-1})x^{-1})e)y^{-1}) \in E_{\scriptscriptstyle{S}}.$$ Hence $exy\, \mathcal{C}\, yexyy^{-1}$, $eyx\,
 \mathcal{C}\, yexyy^{-1}$. But $x, y \in \ker{\alpha_{n-1}}=\ker{\beta_{n}}$, so
 $$exy\,\beta_{n}\,exx^{-1}yy^{-1}=yy^{-1}exx^{-1}yy^{-1}\,\beta_{n}\,yexyy^{-1},$$
 $$eyx\,\beta_{n}\,eyy^{-1}xx^{-1}=yy^{-1}exx^{-1}yy^{-1}\,\beta_{n}\,yexyy^{-1}.$$ Hence $exy\,(\beta_{n} \cap \mathcal{C})\,yexyy^{-1}$,
 $eyx\,(\beta_{n} \cap \mathcal{C})\,yexyy^{-1}$. Since $\beta_{n} \cap\,\mathcal{C}$ is a congruence by Proposition \ref{boeu}, it is also an
 equivalence relation. So $exy\,(\beta_{n} \cap\,\mathcal{C})\,eyx$. That $\beta_{n} \cap\,\mathcal{C}$ is an idempotent pure congruence gives
 $eyx \in E_{\scriptscriptstyle{S}}$. We deduce that $\ker{\alpha_{n-1}}$ is $E$-reflexive.
\end{proof}

For a given congruence $\rho$, in a similar way to Lemma \ref{referee}'s we may find that the least $\ker{\alpha_n}$-is-Clifford congruence
containing $\rho$ exists. Denote it by $(\alpha_{n+2})_\rho$. We are now ready for characterizations of $\beta_n$-is-over $E$-unitary
congruences.

\begin{proposition}\label{boeuc}
 For $n \geqslant 1$, the following statements concerning a congruence $\rho$ on an inverse semigroup $S$ are equivalent.\\
 (1) $\rho$ is a $\beta_n$-is-over-$E$-unitary congruence;\\
 (2) $(\alpha_{n+1})_{\rho} \subseteq \rho^K$, where $(\alpha_{n+1})_{\rho}$ is the least $\ker{\alpha_{n-1}}$-is-Clifford congruence on $S$
 containing $\rho$;\\
 (3) $\ker{(\alpha_{n+1})_{\rho}}=\ker{\rho}$.
\end{proposition}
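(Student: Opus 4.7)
The plan is to develop the proof in exact parallel with Proposition~\ref{kercliffcon}, but using Proposition~\ref{boeu} (the characterization of $\beta_n$-is-over-$E$-unitary semigroups) and the kernel / $\mathcal{K}$-side of the congruence lattice instead of the trace / $\mathcal{T}$-side. The key preliminary observation, established by a standard correspondence-theorem argument, is that $(\alpha_{n+1})_\rho/\rho$ is the least $\ker{\alpha_{n-1}}$-is-Clifford congruence on $S/\rho$; in symbols, $\alpha_{n+1}(S/\rho)=(\alpha_{n+1})_\rho/\rho$. This is proved exactly as in the opening lines of the proof of Proposition~\ref{kercliffcon}: any $\ker{\alpha_{n-1}}$-is-Clifford congruence on $S/\rho$ pulls back to a $\ker{\alpha_{n-1}}$-is-Clifford congruence on $S$ containing $\rho$, and vice versa, under the quotient correspondence.

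For $(1)\Rightarrow(2)$, I will invoke equivalence $(1)\Leftrightarrow(7)$ of Proposition~\ref{boeu} applied to $S/\rho$: $S/\rho$ being a $\beta_n$-is-over-$E$-unitary semigroup is equivalent to $\alpha_{n+1}(S/\rho)\subseteq\tau(S/\rho)$. Since $\tau(S/\rho)=\rho^K/\rho$ (as $\rho^K$ is by definition the largest congruence on $S$ with the same kernel as $\rho$, and the greatest idempotent pure congruence on $S/\rho$ corresponds to this), combining with the identification $(\alpha_{n+1})_\rho/\rho = \alpha_{n+1}(S/\rho)$ gives $(\alpha_{n+1})_\rho\subseteq\rho^K$.

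For $(2)\Rightarrow(3)$, this is the purely lattice-theoretic half: from $\rho\subseteq(\alpha_{n+1})_\rho\subseteq\rho^K$ I take kernels and use $\ker{\rho^K}=\ker{\rho}$ to sandwich $\ker{(\alpha_{n+1})_\rho}$ between two copies of $\ker{\rho}$. For $(3)\Rightarrow(1)$, I will observe first that $\ker{(\alpha_{n+1})_\rho}=\ker{\rho}$ means $(\alpha_{n+1})_\rho$ and $\rho$ lie in the same $\mathcal{K}$-class, hence $(\alpha_{n+1})_\rho\subseteq\rho^K$; quotienting by $\rho$ shows $(\alpha_{n+1})_\rho/\rho$ is an idempotent pure congruence on $S/\rho$. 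Since $(\alpha_{n+1})_\rho/\rho=\alpha_{n+1}(S/\rho)$ is also, by construction, a $\ker{\alpha_{n-1}}$-is-Clifford congruence on $S/\rho$, condition $(6)$ of Proposition~\ref{boeu} is verified for $S/\rho$, so $S/\rho$ is a $\beta_n$-is-over-$E$-unitary semigroup, i.e.\ $\rho$ is a $\beta_n$-is-over-$E$-unitary congruence.

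The only nontrivial obstacle is the verification that $(\alpha_{n+1})_\rho/\rho=\alpha_{n+1}(S/\rho)$, which rests on existence of the least $\ker{\alpha_{n-1}}$-is-Clifford congruence containing a given congruence; but this is precisely the dual of Lemma~\ref{referee} already noted in the paragraph preceding the statement, so no new machinery is needed. Everything else is formal bookkeeping between $\tau$ on $S/\rho$ and $\rho^K$ on $S$, mirroring the $\mu$ versus $\rho^T$ transfer used in Proposition~\ref{kercliffcon}.
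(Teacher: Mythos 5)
Your proposal is correct and follows essentially the same route as the paper's own proof: the correspondence-theorem identification $(\alpha_{n+1})_\rho/\rho=\alpha_{n+1}(S/\rho)$, the transfer $\tau(S/\rho)=\rho^K/\rho$ for $(1)\Rightarrow(2)$, the kernel sandwich $\ker{\rho}\subseteq\ker{(\alpha_{n+1})_\rho}\subseteq\ker{\rho^K}=\ker{\rho}$ for $(2)\Rightarrow(3)$, and the observation that $(\alpha_{n+1})_\rho/\rho$ is an idempotent pure $\ker{\alpha_{n-1}}$-is-Clifford congruence on $S/\rho$, closing the cycle via Proposition~\ref{boeu}. Your explicit citations of conditions (7) and (6) of Proposition~\ref{boeu} merely make precise what the paper invokes implicitly.
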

\begin{proof}
 $(1) \Rightarrow (2)$. The correspondence of congruences on $S$ containing $\rho$ and congruences on $S/\rho$ shows that for any $a, b \in S$,
 $$a\,(\alpha_{n+1})_\rho\,b \iff (a\rho)\,\alpha_{n+1}(S/\rho)\,(b\rho).$$ If $S/\rho$ is a $\beta_n$-is-over-$E$-unitary semigroup, then
 $(\alpha_{n+1})_{\rho}/\rho=\alpha_{n+1}(S/\rho) \subseteq \tau(S/\rho)=\rho^K/\rho$, and thus $(\alpha_{n+1})_{\rho} \subseteq \rho^K$.

 $(2) \Rightarrow (3)$. Since $\rho \subseteq (\alpha_{n+1})_{\rho} \subseteq \rho^K$, we have $\ker{\rho} \subseteq
 \ker{(\alpha_{n+1})_{\rho}} \subseteq \ker{\rho^K}=\ker{\rho}$, which implies $\ker{(\alpha_{n+1})_{\rho}}=\ker{\rho}$.

 $(3) \Rightarrow (1)$. $\ker{(\alpha_{n+1})_{\rho}}=\ker{\rho}$ implies $(\alpha_{n+1})_{\rho} \subseteq \rho^K$. Since
 $\ker{\rho}=\ker{(\alpha_{n+1})_{\rho}}$, $(\alpha_{n+1})_{\rho}/\rho$ is an idempotent pure congruence on $S/\rho$, which gives that $S/\rho$
 is a $\beta_n$-is-over-$E$-unitary semigroup by Proposition \ref{boeu}. This completes the proof that $\rho$ is a
 $\beta_n$-is-over-$E$-unitary congruence.
\end{proof}

We now turn to the set of all $\beta_n$-is-over-$E$-unitary congruences on an inverse semigroup. Recall that $\mathcal{B}_n$ is the set of all
congruences $\theta$ on $S$ such that $\beta_n(S/\theta)$ is over $E$-unitary semigroups, or equivalently, the set of all
$\beta_n$-is-over-$E$-unitary congruences on $S$ ordered by inclusion.

\begin{theorem}\label{boeuex}
 Let $S$ be an inverse semigroup.\\
 (1) $\mathcal{B}_n$ is a complete $\cap$-subsemilattice of $\mathcal{C}(S)$ with least element $\beta_{n+2}=(\alpha_{n+1})_k=(\alpha_{n+1}
 \cap \mathcal{L})^*$ and greatest element $\omega$;\\
 (2) the interval $[\beta_{n+2}, \alpha_{n+1}]$ is a complete sublattice of $\mathcal{B}_n$.
\end{theorem}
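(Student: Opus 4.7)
The plan is to mirror the proof of Theorem \ref{kccex} under the natural duality that exchanges $\alpha\leftrightarrow\beta$, lower-$t$ $\leftrightarrow$ lower-$k$, $\mu\leftrightarrow\tau$, and Proposition \ref{kercliffcon} $\leftrightarrow$ Proposition \ref{boeuc}. Two preliminary facts will be used repeatedly: first, $\alpha_{n+1}$ is itself a $\ker{\alpha_{n-1}}$-is-Clifford congruence (Theorem \ref{kccex}(1) applied at index $n-1$), so that $(\alpha_{n+1})_{\theta}\supseteq\alpha_{n+1}$ for every congruence $\theta$ on $S$; second, $\beta_{n+2}=(\alpha_{n+1})_k$ is $\mathcal{K}$-equivalent to $\alpha_{n+1}$, so in particular $\alpha_{n+1}\subseteq(\beta_{n+2})^K$ and $\ker\beta_{n+2}=\ker\alpha_{n+1}$.

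For part (1), Lemma \ref{referee} immediately gives that $\mathcal{B}_n$ is a complete $\cap$-subsemilattice of $\mathcal{C}(S)$, and $\omega\in\mathcal{B}_n$ is evident, so $\omega$ is the greatest element. The identifications $\beta_{n+2}=(\alpha_{n+1})_k=(\alpha_{n+1}\cap\mathcal{L})^*$ follow from Definition \ref{not} together with Lemma \ref{min}. To place $\beta_{n+2}$ in $\mathcal{B}_n$ I verify Proposition \ref{boeuc}(2): since $\alpha_{n+1}$ is a $\ker{\alpha_{n-1}}$-is-Clifford congruence containing $\beta_{n+2}$ we have $(\alpha_{n+1})_{\beta_{n+2}}\subseteq\alpha_{n+1}$, and the first preliminary fact provides the reverse inclusion, so $(\alpha_{n+1})_{\beta_{n+2}}=\alpha_{n+1}\subseteq(\beta_{n+2})^K$. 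For minimality, given any $\rho\in\mathcal{B}_n$, Proposition \ref{boeuc}(2) combined with $(\alpha_{n+1})_\rho\supseteq\alpha_{n+1}$ yields $\alpha_{n+1}\subseteq\rho^K$; the monotonicity of lower-$k$, visible from $\rho_k=(\rho\cap\mathcal{L})^*$ in Lemma \ref{min}, then delivers $\beta_{n+2}=(\alpha_{n+1})_k\subseteq(\rho^K)_k=\rho_k\subseteq\rho$.

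For part (2), any $\rho\in[\beta_{n+2},\alpha_{n+1}]$ is squeezed between two congruences sharing a common kernel, forcing $\ker\rho=\ker\alpha_{n+1}$. The chain $\rho\subseteq(\alpha_{n+1})_\rho\subseteq\alpha_{n+1}$ then yields $\ker(\alpha_{n+1})_\rho=\ker\rho$, so Proposition \ref{boeuc}(3) places $\rho$ in $\mathcal{B}_n$. Since $[\beta_{n+2},\alpha_{n+1}]$ is an interval of the complete lattice $\mathcal{C}(S)$, it is automatically closed under arbitrary meets and joins, and the preceding observation shows every such meet or join is already $\beta_n$-is-over-$E$-unitary; this is the complete sublattice assertion.

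The main obstacle I anticipate is purely bookkeeping rather than technical: Proposition \ref{boeuc} characterizes $\beta_n$-is-over-$E$-unitary congruences through $\ker{\alpha_{n-1}}$-is-Clifford auxiliaries, so one must invoke Theorem \ref{kccex} at the correctly shifted index $n-1$ (and not at $n$ or $n+1$) to recognize $\alpha_{n+1}$ as a legitimate witness and thereby to guarantee both $(\alpha_{n+1})_\theta\supseteq\alpha_{n+1}$ and $(\alpha_{n+1})_{\beta_{n+2}}\subseteq\alpha_{n+1}$. Once this index shift is aligned, the rest of the argument is a routine transcription of the $\alpha/t$ proof of Theorem \ref{kccex} along the $\alpha/\beta$, $t/k$, $\mu/\tau$ duality.
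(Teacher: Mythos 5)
Your proposal is correct and takes essentially the same route as the paper's own proof: Lemma \ref{referee} for the complete $\cap$-semilattice claim, the identity $(\alpha_{n+1})_{\beta_{n+2}}=\alpha_{n+1}\subseteq(\beta_{n+2})^K$ combined with Proposition \ref{boeuc} for membership and minimality of $\beta_{n+2}=(\alpha_{n+1})_k$, and the kernel-squeezing interval argument (the paper's ``same lines as Theorem \ref{kccex}'') for part (2). The only difference is expository: you make explicit the index-shifted appeal to Theorem \ref{kccex}(1) at level $n-1$ (that $\alpha_{n+1}$ is the least $\ker{\alpha_{n-1}}$-is-Clifford congruence), which the paper uses silently when it writes $(\alpha_{n+1})_{(\alpha_{n+1})_k}=\alpha_{n+1}$ and $\alpha_{n+1}\subseteq(\alpha_{n+1})_\rho$.
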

\begin{proof}
 (1) It follows immediately from Lemma \ref{referee} that $\mathcal{B}_n$ is a complete $\cap$-subsemilattice of $\mathcal{C}(S)$.

 To prove that $(\alpha_{n+1})_k$ is the least $\beta_n$-is-over-$E$-unitary congruence on $S$, we first note that
 $(\alpha_{n+1})_{(\alpha_{n+1})_k}=\alpha_{n+1} \subseteq \alpha_{n+1}^K=((\alpha_{n+1})_k)^K$ so that $(\alpha_{n+1})_k$ is a
 $\beta_n$-is-over-$E$-unitary congruence. If $\rho$ is a $\beta_n$-is-over-$E$-unitary congruence, then $\alpha_{n+1} \subseteq
 (\alpha_{n+1})_\rho \subseteq \rho^K$ and $(\alpha_{n+1})_k \subseteq (\rho^K)_k=\rho_k \subseteq \rho$, which implies that
 $\beta_{n+2}=(\alpha_{n+1})_k$ is the least $\beta_n$-is-over-$E$-unitary congruence.

 (2) The argument here goes along the same lines as in Theorem \ref{kccex}.
\end{proof}

We conclude this section with a new observation comparing to Petrich -- Reilly \cite[Theorem 5.5]{network}.

\begin{definition}
 An inverse semigroup $S$ might satisfy one of the following implications:\\
 (A$_0'$) $x=y$; (A$_1'$) $x^{-1}x=y^{-1}y$; (A$_2'$) $y \in E\zeta$;\\
 (A$_n'$) $xy=x$, $x^{-1}x\,\alpha_{n-2}\,yy^{-1} \Rightarrow y \in E\zeta$, $n \geqslant 3$;\\
 (B$_0'$) $x=y$; (B$_1'$) $y \in E$;\\
 (B$_n'$) $xy=x$, $x^{-1}x\,\alpha_{n-1}\,yy^{-1} \Rightarrow y \in E$, $n \geqslant 2$.
\end{definition}

We now come to the main theorem.

\begin{theorem}\label{net}
 For an inverse semigroup $S$,\\
 (1) $\alpha_n$ is the least congruence $\rho$ on $S$ such that $S/\rho$ satisfies (A$_n'$);\\
 (2) $\beta_n$ is the least congruence $\rho$ on $S$ such that $S/\rho$ satisfies (B$_n'$).
\end{theorem}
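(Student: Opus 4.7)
The plan is to reduce Theorem \ref{net} to the implication-theoretic characterizations of $\ker{\alpha_n}$-is-Clifford and $\beta_n$-is-over-$E$-unitary semigroups contained in Propositions \ref{kercliff} and \ref{boeu}, together with the extremality already proved in Theorems \ref{kccex} and \ref{boeuex}. The key observation is that (A$_n'$) and (B$_n'$) are, after an index shift by two, exactly the implications appearing in Proposition \ref{kercliff}(12) and Proposition \ref{boeu}(10).

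For part (2), I first dispatch the small cases. For $n=0$, (B$_0'$) forces $S/\rho$ trivial, so $\beta_0=\omega$ is the unique, hence the least, such congruence. For $n=1$, (B$_1'$) says every element of $S/\rho$ is idempotent, so $S/\rho$ is a semilattice, with least such congruence $\eta=\beta_1$. For $n=2$, (B$_2'$) reads ``$xy=x$, $x^{-1}x\,\alpha_1\,yy^{-1}\Rightarrow y\in E$''; since $\alpha_1=\sigma$ has universal trace, the second premise holds automatically, so (B$_2'$) is equivalent to the defining implication of an $E$-unitary semigroup, whose least congruence is $\pi=\beta_2$. For $n\geq 3$, (B$_n'$) is the implication of Proposition \ref{boeu}(10) with the Proposition's $n$ replaced by $n-2$; hence $S/\rho$ satisfies (B$_n'$) if and only if $\rho$ is a $\beta_{n-2}$-is-over-$E$-unitary congruence, and Theorem \ref{boeuex}(1) identifies the least such congruence as $\beta_{(n-2)+2}=\beta_n$.

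Part (1) is handled in parallel. For $n=0$, $\alpha_0=\omega$ is forced. For $n=1$, (A$_1'$) requires all elements of $S/\rho$ to share the same right idempotent $x^{-1}x$; taking $y=x^{-1}$ and then $y\in E$ forces $S/\rho$ to have a single idempotent, so $S/\rho$ is a group, with least such congruence $\sigma=\alpha_1$. For $n=2$, (A$_2'$) asserts $y\in E\zeta$ for every $y$, i.e., $S/\rho$ is Clifford, with least congruence $\nu=\alpha_2$. For $n\geq 3$, (A$_n'$) coincides with the implication of Proposition \ref{kercliff}(12) after substituting $n-2$ for its $n$, so $S/\rho$ satisfies (A$_n'$) if and only if $\rho$ is a $\ker{\alpha_{n-2}}$-is-Clifford congruence, and Theorem \ref{kccex}(1) identifies the least such congruence as $\alpha_{(n-2)+2}=\alpha_n$.

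I do not anticipate a serious obstacle: once the earlier characterizations are in hand, Theorem \ref{net} is essentially a matter of matching implications and reading off the extremal congruence. The only non-routine step is the reduction of (B$_2'$) and (A$_2'$) to the classical $E$-unitary and Clifford conditions, for which one uses that $\alpha_1=\sigma$ has universal trace and that the unconditional requirement $y\in E\zeta$ is precisely the Clifford property.
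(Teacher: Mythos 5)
Your proposal is correct and takes essentially the same approach as the paper: both reduce the case $n \geqslant 3$ to the index-shifted implication characterizations in Proposition \ref{kercliff}(12) and Proposition \ref{boeu}(10), identify the congruences in question as $\ker{\alpha_{n-2}}$-is-Clifford (resp. $\beta_{n-2}$-is-over-$E$-unitary) congruences, and read off minimality from Theorems \ref{kccex} and \ref{boeuex}, treating $n \leqslant 2$ separately. The only cosmetic differences are that you dispense with the paper's induction --- which is inessential, since the fact it extracts from the induction hypothesis (that $\beta_{n-1}$ is a $\beta_{n-3}$-is-over-$E$-unitary congruence) is already available from Remark \ref{beta} --- and that you verify the base cases directly rather than citing Petrich--Reilly and Petrich.
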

\begin{proof}
 We will first observe that the theorem is true for $n=0$, 1 and 2, and then complete the proof with an induction argument.

 The assertion of the theorem for $\alpha_0$, $\alpha_1$, $\alpha_2$, $\beta_0$ and $\beta_1$ follows directly from \cite[Theorem
 5.5]{network}. $\beta_2$, as we know, is the least $E$-unitary congruence, or the least $\beta_0$-is-over-$E$-unitary congruence. It follows
 from \cite[Proposition \@Roman3.7.2]{inverse} that $\beta_2$ is the least congruence $\rho$ such that $S/\rho$ satisfies (B$_2'$).

 Now suppose that $n \geqslant 3$ and that the theorem is valid for smaller integers. Then, by the induction hypothesis that $\beta_{n-1}$ is a
 $\beta_{n-3}$-is-over-$E$-unitary congruence, applying Proposition \ref{kercliffcon}, we obtain that $S/\alpha_n$ is a
 $\ker{\alpha_{n-2}}$-is-Clifford semigroup, which satisfies (A$_n'$) by virtue of Proposition \ref{kercliff}. Similarly, applying Proposition
 \ref{boeuc}, by the induction hypothesis that $\alpha_{n-1}$ is a $\ker{\alpha_{n-3}}$-is-Clifford congruence, we get that $S/\beta_n$ is a
 $\beta_{n-2}$-is-over-$E$-unitary semigroup, which satisfies (B$_n'$) in view of Proposition \ref{boeu}.

 The minimality of these congruences follows immediately from Theorem \ref{kccex} and Theorem \ref{boeuex}.
\end{proof}

\begin{remark}
 (1) We obtain by Theorem \ref{kccex} that $\eta_t$ is the least Clifford congruence, and that $\pi_t$ is the least $E\omega$-Clifford
 congruence, which is due to Wang - Feng \cite{feng}.

 (2) By Theorem \ref{boeuex} we get that $\sigma_k$ is the least $E$-unitary congruence, and that $(\pi_t)_k$ is the least
 $\pi$-is-over-$E$-unitary congruence. Proposition \ref{ker} shows that $S/(\pi_t)_k$ is an $E\omega$-$E$-reflexive semigroup. Here a
 correction should be made to Theorem 3.2 of \cite{feng}: $\pi$-is-over-$E$-unitary semigroups are $E\omega$-$E$-reflexive, but
 $E\omega$-$E$-reflexive semigroups are not necessarily $\pi$-is-over-$E$-unitary semigroups.
\end{remark}

The min network is redepicted in Figure \ref{new} together with the types of semigroups to which the quotient semigroups belong.

\begin{figure}[!hbt]\label{new}
 \renewcommand*\figurename{Figure}
 \renewcommand*\captionlabeldelim{}
 \setlength{\unitlength}{0.7cm}
 \begin{center}
  \begin{picture}(4,24)
   \drawline(1,3)(4,6)(0,10)(4,14)(0,18)(4,22)(2,24)(0,22)(4,18)(0,14)(4,10)(0,6)(4,2)(3,1)
   \allinethickness{0.7mm}
   \put(0,6){\circle*{0.06}} \put(0,10){\circle*{0.06}} \put(0,14){\circle*{0.06}} \put(0,18){\circle*{0.06}} \put(0,22){\circle*{0.06}}
   \put(2,24){\circle*{0.06}} \put(2,20){\circle*{0.06}} \put(2,16){\circle*{0.06}} \put(2,12){\circle*{0.06}} \put(2,8){\circle*{0.06}}
   \put(2,4){\circle*{0.06}} \put(4,2){\circle*{0.06}} \put(4,6){\circle*{0.06}} \put(4,10){\circle*{0.06}} \put(4,14){\circle*{0.06}}
   \put(4,18){\circle*{0.06}} \put(4,22){\circle*{0.06}}
   \put(2,24.3){\makebox(0,0)[b]{$\omega$}}
   \put(-0.3,22){\makebox(0,0)[r]{$\sigma=\alpha_1$}}
   \put(-0.3,18){\makebox(0,0)[r]{$\eta_t=\nu=\alpha_2$}}
   \put(-0.3,14){\makebox(0,0)[r]{$\pi_t=\alpha_3$}}
   \put(-0.3,10){\makebox(0,0)[r]{$\lambda_t=\alpha_4$}}
   \put(-0.3,6){\makebox(0,0)[r]{$(\beta_4)_t=\alpha_5$}}
   \put(4.3,22){\makebox(0,0)[l]{$\beta_1=\eta$}}
   \put(4.3,18){\makebox(0,0)[l]{$\beta_2=\pi=\sigma_k$}}
   \put(4.3,14){\makebox(0,0)[l]{$\beta_3=\lambda=\nu_k$}}
   \put(4.3,10){\makebox(0,0)[l]{$\beta_4=(\pi_t)_k$}}
   \put(4.3,6){\makebox(0,0)[l]{$\beta_5=(\lambda_t)_k$}}
   \put(4.3,2){\makebox(0,0)[l]{$\beta_6=(\alpha_5)_k$}}
   \put(-0.3,21.2){\makebox(0,0)[r]{\footnotesize{\textsf{group}}}}
   \put(-0.3,17.2){\makebox(0,0)[r]{\footnotesize{\textsf{Clifford}}}}
   \put(-0.3,13.2){\makebox(0,0)[r]{\footnotesize{$\ker{\sigma}$-\textsf{is-Clifford}}}}
   \put(-0.3,9.2){\makebox(0,0)[r]{\footnotesize{$\ker{\nu}$-\textsf{is-Clifford}}}}
   \put(-0.3,5.2){\makebox(0,0)[r]{\footnotesize{$\ker{\alpha_3}$-\textsf{is-Clifford}}}}
   \put(4.3,21.2){\makebox(0,0)[l]{\footnotesize{\textsf{semilattice}}}}
   \put(4.3,17.2){\makebox(0,0)[l]{\footnotesize{$E$-\textsf{unitary}}}}
   \put(4.3,13.2){\makebox(0,0)[l]{\footnotesize{$E$-\textsf{reflexive}}}}
   \put(4.3,9.2){\makebox(0,0)[l]{\footnotesize{$\pi$-\textsf{is-over-$E$-unitary}}}}
   \put(4.3,5.2){\makebox(0,0)[l]{\footnotesize{$\lambda$-\textsf{is-over-$E$-unitary}}}}
   \put(4.3,1.2){\makebox(0,0)[l]{\footnotesize{$\beta_4$-\textsf{is-over-$E$-unitary}}}}
   \put(2,0.5){\makebox(0,0)[c]{$\vdots$}}
  \end{picture}
  \caption{\quad min network of inverse semigroups} \label{origin1}
 \end{center}
\end{figure}
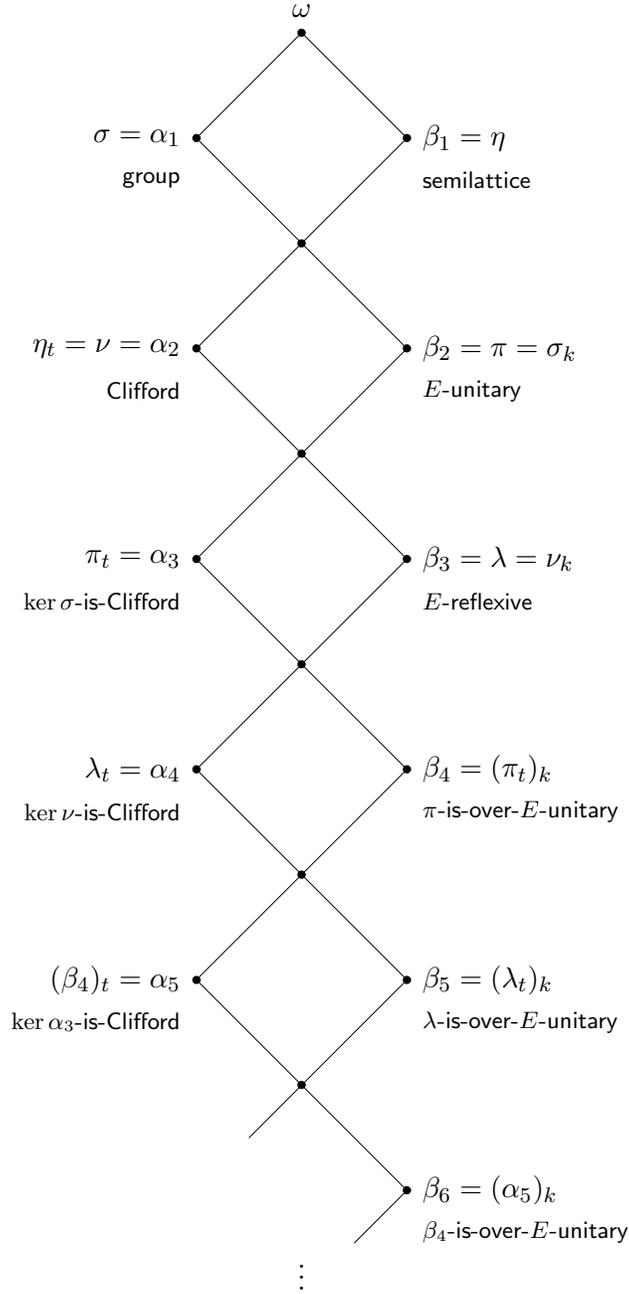

Refer to any inverse semigroup satisfying (A$_n$) as an \emph{$A_n$-semigroup}. Similarly, an inverse semigroup satisfying (B$_n$) is called a
\emph{$B_n$-semigroup}. The next proposition gives one further observation concerning the min network.

\begin{proposition}\label{quotient}
 Let $m$, $n$ be nonnegative integers.\\
 (1) $\alpha_{m+n}$ is the least congruence $\rho$ on $S$ such that $\alpha_n(S/\rho)$ is over $A_m$-semigroups;\\
 (2) $\beta_{m+n}$ is the least congruence $\rho$ on $S$ such that $\beta_n(S/\rho)$ is over $B_m$-semigroups.
\end{proposition}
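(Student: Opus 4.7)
The plan is to prove (1) by induction on $m$; part (2) follows by the same argument under the duality $\alpha \leftrightarrow \beta$, Clifford $\leftrightarrow$ $E$-unitary, interchanging the roles of Propositions \ref{kercliff}, \ref{kercliffcon} with \ref{boeu}, \ref{boeuc}, and of Theorem \ref{kccex} with Theorem \ref{boeuex}.

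First I would dispose of the two base cases. For $n=0$, the congruence $\alpha_0 = \omega$ on $S/\rho$ has $S/\rho$ itself as its unique class, which is trivially a subsemigroup; hence ``$\omega(S/\rho)$ is over $A_m$'' reduces to ``$S/\rho$ is an $A_m$-semigroup'', and Lemma \ref{fres} gives $\alpha_m = \alpha_{m+0}$ as the least such $\rho$. For $m=0$, since $A_0$-semigroups are trivial and idempotent classes of any congruence are always subsemigroups, the condition forces both $\tr \alpha_n(S/\rho) = \varepsilon$ and $\ker \alpha_n(S/\rho) = E_{S/\rho}$; by the kernel--trace characterization of congruences stated at the beginning of Section~1, this yields $\alpha_n(S/\rho) = \varepsilon$, so $S/\rho$ satisfies $(\mathrm{A}_n)$, and Lemma \ref{fres} again delivers $\alpha_n = \alpha_{0+n}$.

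For the inductive step with $m, n \geq 1$, the key ingredient is the identification, implicit in Proposition \ref{kercliff} and Theorem \ref{net}, of $A_m$-semigroups for $m \geq 2$ with $\ker\alpha_{m-2}$-is-Clifford semigroups (the small-$m$ cases of groups and trivial semigroups being handled separately using the explicit forms of $(\mathrm{A}_1)$, $(\mathrm{A}_0)$ together with the equivalent conditions of Proposition \ref{kercliff}). I would then split the argument into two inclusions. For minimality, given any $\rho$ such that $\alpha_n(S/\rho)$ is over $A_m$-semigroups, each subsemigroup $\alpha_n(S/\rho)$-class is a $\ker\alpha_{m-2}$-is-Clifford semigroup; aggregating these class-level conditions via Proposition \ref{class} (iterated through the $\alpha$--$\beta$ chain) promotes $S/\rho$ itself to a $\ker \alpha_{m+n-2}$-is-Clifford semigroup, whence Theorem \ref{kccex} forces $\alpha_{m+n} \subseteq \rho$. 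For admissibility of $\alpha_{m+n}$, Theorem \ref{kccex} tells us $S/\alpha_{m+n}$ is $\ker\alpha_{m+n-2}$-is-Clifford, and a descent through the equivalent conditions of Proposition \ref{kercliff} (in particular the equivalences $(1)\Leftrightarrow(8)\Leftrightarrow(11)$) shows that the subsemigroup classes of $\alpha_n(S/\alpha_{m+n})$ are in turn $\ker\alpha_{m-2}$-is-Clifford, i.e., lie in $A_m$.

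The hard part will be the aggregation step in the minimality direction: showing that $\ker\alpha_{m-2}$-Cliffordness of every subsemigroup $\alpha_n$-class globally implies $\ker\alpha_{m+n-2}$-Cliffordness of the ambient semigroup. This is precisely the type of assertion prepared by Proposition \ref{class}, but an iterated version spanning the full index gap of $n$ is required; correctly tracking the alternation between the kernel condition (introduced at each $\alpha \to \beta$ step via Proposition \ref{kercliffcon}) and the trace condition (introduced at each $\beta \to \alpha$ step via Proposition \ref{boeuc}) is where the arithmetic $m+n$ on the subscript materializes in the argument.
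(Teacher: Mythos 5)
Your base cases are correct, and so is the identification (via Proposition \ref{kercliff} together with Theorem \ref{net}) of $A_m$-semigroups, $m \geqslant 2$, with $\ker{\alpha_{m-2}}$-is-Clifford semigroups. But the proof stops exactly where the content of the proposition lies: both halves of your ``inductive step'' are asserted rather than proved. The aggregation step (if every subsemigroup class of $\alpha_n(S/\rho)$ is $\ker{\alpha_{m-2}}$-is-Clifford, then $S/\rho$ itself is $\ker{\alpha_{m+n-2}}$-is-Clifford, whence $\alpha_{m+n} \subseteq \rho$ by Theorem \ref{kccex}) and the descent step (the subsemigroup classes of $\alpha_n(S/\alpha_{m+n})$ are $\ker{\alpha_{m-2}}$-is-Clifford) are each equivalent to what is being proved, and you defer the first as ``the hard part,'' appealing to an ``iterated version'' of Proposition \ref{class} that is never formulated. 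Moreover, Proposition \ref{class} is not the right tool as it stands: its hypothesis concerns the \emph{global} kernel $\ker{\alpha_{k-1}}$ of $S$ intersected with $\eta$-classes (i.e. $\beta_1$-classes), whereas your step needs control of the \emph{intrinsic} min network of each subsemigroup $\alpha_n$-class $E_0$, that is, of $\alpha_{m-2}(E_0)$ and $\beta_{m-3}(E_0)$ computed in $E_0$ as an inverse semigroup in its own right, for general $n$. Nothing in your outline relates these intrinsic congruences to restrictions of the ambient ones, and that relation is precisely where the subscript arithmetic $m+n$ has to materialize. (Note also that, as written, your induction on $m$ never invokes the induction hypothesis, so the inductive scaffolding is doing no work.)

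For comparison, the paper does not induct and never routes through the Clifford characterizations. It fixes an idempotent class $E_0=(e\alpha_{m+n})(\alpha_n/\alpha_{m+n})$ of $\alpha_n(S/\alpha_{m+n})$ and verifies the implication (A$_m$) in $E_0$ directly, the entire burden being carried by the bridging identity $\beta_{m-3}(E_0)=(\beta_{n+m-3}/\alpha_{m+n})|_{E_0}$: a relation $x\,\beta_{m-3}(E_0)\,y$ inside $E_0$ becomes $a\,\beta_{m+n-3}\,b$ in $S$, and then the implication (A$_{m+n}$), which $S/\alpha_{m+n}$ satisfies by Lemma \ref{fres}, returns $y \in E\zeta$; minimality is then read off from the minimality statements of Lemma \ref{fres} and Theorem \ref{net}. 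That identity is exactly the class-to-ambient bridge your proposal lacks; if you want to salvage your plan, you must state and prove such a lemma (or reprove it in your Clifford language), since without it neither inclusion in your argument goes through.
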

\begin{proof}
 We shall only prove that $(e\alpha_{m+n})(\alpha_n(S/\alpha_{m+n}))=(e\alpha_{m+n})(\alpha_n/\alpha_{m+n})$ satisfies (A$_m$) for any $e \in
 E_S$. Denote $(e\alpha_{m+n})(\alpha_n/\alpha_{m+n})=\{a\alpha_{m+n}\,|\,a\,\alpha_n\,e\}$ by $E_0$. For $m \geqslant 3$, notice that
 $\beta_{m-3}(E_0)=(\beta_{n+m-3}/\alpha_{m+n})|_{E_0}$. Suppose that $a\alpha_{m+n}$, $b\alpha_{m+n} \in E_0$ with
 $(a\alpha_{m+n})(b\alpha_{m+n})=a\alpha_{m+n}$ and $(a\alpha_{m+n}) \beta_{m-3}(E_0) (b\alpha_{m+n})$. Then
 $(a\alpha_{m+n})(b\alpha_{m+n})=a\alpha_{m+n}$ and $a\,\beta_{m+n-3}\,b$. But $S/\alpha_{m+n}$ satisfies (A$_{m+n}$) and $b\alpha_{m+n} \in
 E_{S/\alpha_{m+n}}\zeta$ and hence $b\alpha_{m+n} \in E_{E_0}\zeta$. Thus $E_0$ satisfies (A$_m$). The minimality of the congruence follows
 immediately from the fact that $\alpha_{m+n}$ is the least congruence $\gamma$ on $S$ such that $S/\gamma$ satisfies (A$_m'$).

 The remaining arguments go along the same lines and are omitted.
\end{proof}

\section{Coincidences}

Petrich \cite{inverse} investigates necessary and sufficient conditions in order that two of the congruences in $\{\omega, \sigma, \eta, \nu,
\pi, \lambda, \mu, \tau, \varepsilon\}$ coincide. This creates many interesting classes of inverse semigroups. Further equivalent conditions
can be established if $\alpha_n$ and $\beta_n$ are taken into account.

\begin{proposition}
 The following statements hold in any inverse semigroups.\\
 (1) For $n \geqslant 2$, $\alpha_n=\omega \iff \sigma=\eta=\omega \iff \beta_n=\omega$;\\
 (2) for $n \geqslant 3$, $\alpha_n=\sigma \iff \beta_{n-1}=\sigma$;\\
 (3) for $n \geqslant 2$, $\alpha_n=\eta \iff \beta_{n+1}=\eta$;\\
 (4) for $n \geqslant 4$, $\alpha_n=\nu \iff \beta_{n-1}=\nu$;\\
 (5) for $n \geqslant 3$, $\alpha_n=\pi \iff \beta_{n+1}=\pi$;\\
 (6) for $n \geqslant 4$, $\alpha_n=\lambda \iff \beta_{n+1}=\lambda$;\\
 (7) for $n \geqslant 3$, $\alpha_n=\mu \iff S$ is a $\beta_{n-3}$-is-over-$E$-unitary fundamental inverse semigroup;\\
 (8) for $n \geqslant 1$, $\alpha_n=\tau \iff S$ is a $\beta_{n-1}$-is-over-$E$-unitary semigroup with $\tr{\tau}=\tr{\beta_{n-1}}$;\\
 (9) for $n \geqslant 2$, $\beta_n=\tau \iff S$ is a $\beta_{n-2}$-is-over-$E$-unitary $E$-disjunctive inverse semigroup.
\end{proposition}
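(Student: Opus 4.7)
The plan is to leverage three structural facts throughout. (i) By the sublattice identity $\alpha_{n-1} \cap \beta_{n-1} = \alpha_n \vee \beta_n$, both sequences are decreasing and we have the cross-containments $\alpha_n \subseteq \beta_{n-1}$ and $\beta_n \subseteq \alpha_{n-1}$. (ii) The operators $t$ and $k$ are idempotent. (iii) A congruence is determined by its trace and kernel. I will also use the identifications $\sigma = \omega_t$, $\eta = \omega_k$, $\nu = \eta_t$, $\pi = \sigma_k$, $\lambda = \nu_k$, together with the standard facts $\tr{\mu} = \varepsilon|_{E_{\scriptscriptstyle{S}}}$ and $\ker{\tau} = E_{\scriptscriptstyle{S}}$.

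For (1) through (6) the proofs share a common shape. In (1), monotonicity forces $\alpha_n = \omega$ or $\beta_n = \omega$ to propagate down the lattice, giving $\alpha_1 = \sigma = \omega$ and $\beta_1 = \eta = \omega$; the converse propagates up by induction via $\omega_t = \sigma$ and $\omega_k = \eta$. In each of (2)--(6), one direction applies the appropriate $t$ or $k$ operator and invokes idempotency (each target $\sigma$, $\eta$, $\nu$, $\pi$, $\lambda$ is already of the form $\omega_t$, $\omega_k$, $\eta_t$, $\sigma_k$, $\nu_k$), while the other direction pinches $\beta_{n \pm 1}$ between $\alpha_n$ and an earlier member of the chain---the stated lower bound on $n$ is precisely what makes the pinch work (for example, $\beta_{n-1} \subseteq \beta_2 = \pi \subseteq \sigma$ requires $n \geq 3$ in (2), and $\beta_{n-1} \subseteq \alpha_{n-2} \subseteq \alpha_2 = \nu$ requires $n \geq 4$ in (4)).

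Part (7) is the main conceptual obstacle, since one must extract \emph{fundamental} (i.e.\ $\mu = \varepsilon$) from the prima facie weaker hypothesis $\alpha_n = \mu$. The key is that $\alpha_n = (\beta_{n-1})_t$ shares the trace of $\beta_{n-1}$, so $\alpha_n = \mu$ forces $\tr{\beta_{n-1}} = \varepsilon|_{E_{\scriptscriptstyle{S}}}$, whence $\beta_{n-1} \subseteq \mu$; combined with $\mu = \alpha_n \subseteq \beta_{n-1}$, this yields $\beta_{n-1} = \mu$. Applying $t$ once more gives $\alpha_n = (\beta_{n-1})_t = \mu_t = \varepsilon$, which simultaneously forces $\mu = \varepsilon$ (fundamental) and $\beta_{n-1} = \varepsilon$ (equivalently, $S$ is $\beta_{n-3}$-is-over-$E$-unitary by Remark \ref{beta}). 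The converse direction is immediate.

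Finally, (8) and (9) are closed by the trace-and-kernel lemma. For (8), the forward direction reads $\tr{\tau} = \tr{\beta_{n-1}}$ off the equality $\alpha_n = \tau$ together with $\tr{\alpha_n} = \tr{\beta_{n-1}}$, and Proposition \ref{boeu} delivers the $\beta_{n-1}$-is-over-$E$-unitary property from $\alpha_n \subseteq \tau$. For the converse, the same proposition yields $\alpha_n \subseteq \tau$, so $\ker{\alpha_n} = E_{\scriptscriptstyle{S}} = \ker{\tau}$; together with the assumed trace equality this forces $\alpha_n = \tau$. Part (9) follows the same blueprint: $\beta_n = \tau$ forces $\ker{\alpha_{n-1}} = \ker{\beta_n} = E_{\scriptscriptstyle{S}}$, so $\alpha_{n-1}$ is idempotent pure, and the squeeze $\tau = \beta_n \subseteq \alpha_{n-1} \subseteq \tau$ gives $\alpha_{n-1} = \tau$; hence $\beta_n = (\alpha_{n-1})_k = \tau_k = \varepsilon$, yielding both $\tau = \varepsilon$ ($E$-disjunctive) and $\beta_n = \varepsilon$ (i.e.\ $S$ is $\beta_{n-2}$-is-over-$E$-unitary by Theorem \ref{boeuex}); the converse direction is immediate.
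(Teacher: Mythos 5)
Your proposal is correct and, in all essentials, reproduces the paper's own proof: parts (1)--(6) by idempotency of the operators $t$ and $k$ plus squeezes along the descending chains (your pinch $\beta_{n-1} \subseteq \alpha_{n-2} \subseteq \alpha_2=\nu$ in (4) is a harmless variant of the paper's $\beta_{n-1}\subseteq\beta_3\subseteq\nu$), part (7) by the trace argument forcing $\beta_{n-1}=\mu$ and then $\alpha_n=(\beta_{n-1})_t=\mu_t=\varepsilon$ (your uniform treatment even covers $n=3$, which the paper instead dispatches by citing Wang--Feng), and parts (8)--(9) by trace--kernel determination; your detour through $\alpha_{n-1}$ in (9) is slightly longer than the paper's one-line $\beta_n=(\beta_n)_k=\tau_k=\varepsilon$ but equally valid.

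The one point you should repair is the bottom case $n=1$ of (8). You invoke Proposition \ref{boeu} to conclude that $\alpha_n\subseteq\tau$ forces $S$ to be $\beta_{n-1}$-is-over-$E$-unitary, but that proposition is stated only for parameter $\geqslant 1$, i.e.\ its weakest instance relates $\alpha_2\subseteq\tau$ to $\beta_1$-is-over-$E$-unitary; it says nothing about $\alpha_1=\sigma$. The claim for $n=1$ is true, but needs a different justification. The cleanest fix stays inside the paper's toolkit and is exactly the route the paper uses for $n\geqslant 3$, which in fact works uniformly for all $n\geqslant 1$: from $\alpha_n=\tau$ deduce $\beta_{n+1}=(\alpha_n)_k=\tau_k=\varepsilon$, then apply Remark \ref{beta} (its hypothesis, parameter $n+1\geqslant 2$, is always satisfied) to conclude that $S$ is $\beta_{n-1}$-is-over-$E$-unitary; alternatively cite Petrich's Coincidences III.8.10 for $n=1,2$, as the paper does. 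With that substitution your argument is complete.
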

\begin{proof}
 (1) Suppose that $\alpha_n=\omega$. Since $\alpha_n \subseteq \sigma$ and $\alpha_n \subseteq \eta$, it follows that $\sigma=\eta=\omega$.
 Conversely, if $\sigma=\eta=\omega$, then $\pi=\sigma_k=\omega_k=\eta=\omega$ and $\nu=\eta_t=\omega_t=\sigma=\omega$. Similarly, we have
 $\lambda=\omega$ and $\pi_t=\omega$. Inductively, we have $\alpha_n=\omega$. $\sigma=\eta=\omega \iff \beta_n=\omega$ follows by duality.

 (2) If $\alpha_n=\sigma$, then $\sigma=\alpha_n \subseteq \beta_{n-1} \subseteq \beta_2 \subseteq \sigma$ and thus $\beta_{n-1}=\sigma$.
 Conversely, if $\beta_{n-1}=\sigma$, then $\alpha_n=(\beta_{n-1})_t=\sigma_t=\sigma$.

 (3) If $\alpha_n=\eta$, then $\beta_{n+1}=(\alpha_n)_k=\eta_k=\eta$. Conversely, if $\beta_{n+1}=\eta$, then $\beta_{n+1}=(\alpha_n)_k
 \subseteq \alpha_n \subseteq \eta$ and thus $\alpha_n=\eta$.

 (4) If $\alpha_n=\nu$, then $\nu=\alpha_n=(\beta_{n-1})_t \subseteq \beta_{n-1} \subseteq \beta_3 \subseteq \nu$ and thus $\beta_{n-1}=\nu$.
 Conversely, if $\beta_{n-1}=\nu$, then $\alpha_n=(\beta_{n-1})_t=\nu_t=\nu$.

 (5) If $\alpha_n=\pi$, then $\beta_{n+1}=(\alpha_n)_k=\pi_k=\pi$. Conversely, if $\beta_{n+1}=\pi$, then $\pi=\beta_{n+1}=(\alpha_n)_k
 \subseteq \alpha_n \subseteq \alpha_3 \subseteq \pi$ and thus $\alpha_n=\pi$.

 (6) If $\alpha_n=\lambda$, then $\beta_{n+1}=(\alpha_n)_k=\lambda_k=\lambda$. Conversely, if $\beta_{n+1}=\lambda$, then $\lambda=\beta_{n+1}
 \subseteq \alpha_n \subseteq \alpha_4 \subseteq \lambda$ and thus $\alpha_n=\lambda$.

 (7) For $n=3$, the assertion follows directly from \cite[Proposition 4.3]{feng}. For $n>3$, suppose that $\alpha_n=\mu$. Since
 $\alpha_n=(\beta_{n-1})_t \subseteq \beta_{n-1} \subseteq \alpha_n^T=\mu^T=\mu=\alpha_n$, it follows that $\alpha_n=\beta_{n-1}=\mu$ and thus
 $\mu=\alpha_n=(\beta_{n-1})_t=\mu_t=\varepsilon$, which implies $\beta_{n-1}=\varepsilon$. Thus $\mu=\varepsilon$ gives that $S$ is
 fundamental while $\beta_{n-1}=\varepsilon$ gives that $S$ is a $\beta_{n-3}$-is-over-$E$-unitary semigroup.

 If $S$ is a $\beta_{n-3}$-is-over-$E$-unitary fundamental inverse semigroup, then $\mu=\varepsilon$ and $\beta_{n-1}=\varepsilon$, which imply
 that $\beta_{n-1}=\mu$. Hence $\alpha_n=(\beta_{n-1})_t=\mu_t=\varepsilon=\mu$.

 (8) For $n=1$ and $n=2$, the assertions follow directly from \cite[Coincidences \@Roman3.8.10]{inverse}. For $n \geqslant 3$, if
 $\alpha_n=\tau$, then $\tr{\tau}=\tr{\alpha_n}=\tr{\beta_{n-1}}$ and $\beta_{n+1}=(\alpha_n)_k=\tau_k=\varepsilon$, which give that $S$ is a
 $\beta_{n-1}$-is-over-$E$-unitary semigroup.

 Now suppose that $S$ is a $\beta_{n-1}$-is-over-$E$-unitary semigroup with $\tr{\tau}=\tr{\beta_{n-1}}$. The hypothesis implies that
 $\beta_{n+1}=\varepsilon$ so that $\ker{\alpha_n}=\ker{\beta_{n+1}}=E_{\scriptscriptstyle{S}}=\ker{\tau}$, and thus
 $\tr{\alpha_n}=\tr{\beta_{n-1}}=\tr{\tau}$ gives that $\alpha_n=\tau$.

 (9) For $n=2$, the assertion follows directly from \cite[Coincidences \@Roman3.8.10]{inverse}. For $n>2$, if $\beta_n=\tau$, then
 $\tau=\beta_n=(\beta_n)_k=\tau_k=\varepsilon$, which gives that $S$ is a $\beta_{n-2}$-is-over-$E$-unitary semigroup and is $E$-disjunctive.
 Conversely, if $S$ is a $\beta_{n-2}$-is-over-$E$-unitary $E$-disjunctive inverse semigroup, then $\beta_n=\varepsilon$ and $\tau=\varepsilon$
 which imply that $\beta_n=\tau=\varepsilon$.
\end{proof}

\textbf{Acknowledgements} The authors are grateful to the careful referee for thoughtful comments and insights which helped to improve the paper, in particular, with regard to Lemma \ref{referee} and Proposition \ref{quotient}. The first author would like to thank Professor
Victoria Gould for her continuing support and encouragement. This work is supported by a Grant of the National Natural Science Foundation of
China (11871150) and a Grant of the Ministry of Education of China (18YJCZH206).

\end{document}